\numberwithin{equation}{section}
\newtheorem{theorem}{Theorem}[section]
\newtheorem{lemma}[theorem]{Lemma}
\newtheorem{corollary}[theorem]{Corollary}
\newtheorem{remark}[theorem]{Remark}
\newtheorem{definition}[theorem]{Definition}
\newtheorem{proposition}[theorem]{Proposition}
\newtheorem{assumption}[theorem]{Assumption}
\begin{document}
	\title[Noncommutative Stein's maximal spherical means]{Noncommutative Stein's maximal spherical means}
	\author[Li]{Wei Li}
	\author[Li]{Wenjuan Li}
	\author[Liu]{Jie Liu}
	\author[Wu]{Lian Wu}
	\address{Wei Li, School of Mathematics and Statistics, Central South University, Changsha 410075, China}
	\email{liwei.csu@csu.edu.cn}
	\address{Wenjuan Li, School of Mathematics and Statistics, Northwestern Polytechnical University, Xi'an 710129, China}
	\email{liwj@nwpu.edu.cn}
	\address{Jie Liu, School of Mathematics and Statistics, Northwestern Polytechnical University, Xi'an 710129, China}
	\email{jay2000@mail.nwpu.edu.cn}
	\address{Lian Wu, School of Mathematics and Statistics, Central South University, Changsha 410075, China}
	\email{wulian@csu.edu.cn}

	
	\keywords{Noncommutative maximal inequalities, Stein's spherical means, operator-valued functions}
	\thanks{This work is supported by the National Natural Science Foundation of China (No.12271435, No.11971484).}
	
	\begin{abstract}
		Based on a proper hypothesis on the noncommutative Fourier integral operators, we establish in this paper the  strong-type $(p,p)$ (with $2\leq p\leq \infty$) estimates
		for the operator-valued Stein's maximal spherical means.
	\end{abstract}
	
	\maketitle
	\section{introduction}
	The aim of this paper is to study the boundedness of 
	Stein's maximal spherical means acting on noncommutative $L_{p}$-spaces. These means 
	were firstly introduced by Stein \cite{S1976} in 1976 and have been found significant importance to harmonic analysis and  partial differential equations. For $f \in C_0^{\infty}(\mathbb{R}^n)$, the classical maximal spherical means of (complex) order $\alpha$ of $f$ are defined as follows:
	\begin{equation}\label{1.1}
		\mathscr{M}^\alpha f
		=
		\sup_{t>0}\left|\mathscr{M}_t^\alpha f(x)\right|
		=
		\sup_{t>0}\left| \left(f * m_{\alpha, t}\right)(x)\right|,
	\end{equation}
	where
	$
	m_{\alpha, t}(x)=m_\alpha(x / t) t^{-n}
	$ for $t>0$ and
	$
	m_\alpha(x)=\Gamma(\alpha)^{-1}\left(1-|x|^2\right)_{+}^{\alpha-1}.
	$
	Here $\Gamma(\alpha)$ denotes the Gamma function and $r_+$ is the homogeneous distribution which equals to $r$ when $r>0$, and equals to $0$ if $r\leq 0$.
	These averaging operators  not only have definitions for real positive $\alpha$, but can also be similarly defined for all complex $\alpha$. Indeed,  consider the Fourier transform of $m_\alpha$,
	\begin{equation}\label{1.2}
		\widehat{m_\alpha}(\xi)=\pi^{-\alpha+1}|\xi|^{-\frac{n}{2}-\alpha+1} \mathcal{J}_{\frac{n}{2}+\alpha-1}(2 \pi|\xi|),
	\end{equation}
	where $\mathcal{J}_m(x)$ is the Bessel function of order $m$ (see \cite{SW1971} or the Appendix of \cite{MYZ2017}).  Notice that, by means of analytic continuation, $\operatorname{Re} \alpha \leq 0$ is allowed in the Bessel functions in (\ref{1.2}). It is therefore possible to extend the notion of (\ref{1.1}) to include complex $\alpha$ via Fourier transform
	$$
	\widehat{\mathscr{M}_t^\alpha f}(\xi)=\widehat{m_\alpha}(\xi t) \widehat{f}(\xi).
	$$
	It is also important to remark that $\widehat{m}_\alpha(0)$ is finite and $\widehat{m}_\alpha(\xi)$ is smooth near the origin. For the detailed discussion  on these standard facts, we refer to \cite{S1976} and \cite[Theorem 4.15]{SW1971}.

	In \cite[Theorem 2]{S1976},  Stein proved that when $n\geq 3$,
	\begin{equation}\label{boundedness}
		\left\|\mathscr{M}^\alpha f\right\|_{L_p\left(\mathbb{R}^n\right)}
		\lesssim \|f\|_{L_p\left(\mathbb{R}^n\right)}
	\end{equation}
	if
	$$
	\operatorname{Re} \alpha>1-n+\frac{n}{p} \quad \mbox{for}\,\, 1<p \leq 2,
	$$
	or
	\begin{equation}\label{rangep}
		\operatorname{Re} \alpha>\frac{2-n}{p} \quad \mbox{for}\,\, 2 \leq p \leq \infty.
	\end{equation}
	In the special case  $\alpha=0$, the maximal spherical operator $\mathscr{M}^0$ is bounded on $L_p(\mathbb{R}^n)$ for $p>n /(n-1)$ when $n \geq 3$. This result is sharp in the sense that such boundedness fails for $p \leq n /(n-1)$ if $n \geq 2$. When $n=2$, Bourgain \cite{B1986} proved that for some positive constant $\varepsilon(p)$ the following holds:
	$$
	\left\|\mathscr{M}^\alpha f\right\|_{L_p\left(\mathbb{R}^2\right)}
	\lesssim \|f\|_{L_p\left(\mathbb{R}^2\right)}, \quad \operatorname{Re} \alpha>-\varepsilon(p), \quad 2<p<\infty .
	$$
	Subsequently, based on the local smoothing estimate for the solutions of wave equations, Mockenhaupt, Seeger and Sogge \cite{MSS1992}  strengthened  Bourgain's result. Their work reveals for the first time the essential connection  between the maximal circular means and the local smoothing estimates.

	Later, using the Bourgain-Demeter decoupling theorem \cite{BD2015} and the ideas discussed in \cite{MSS1992}, Miao, Yang and Zheng \cite[Theorem 1.1]{MYZ2017} obtained certain improvements on the range of $\alpha$ in (\ref{boundedness}). Their result can be stated as follows: for $n \geq 2$ and $p \geq 2$, the estimate (\ref{boundedness}) holds whenever
	$$
	\operatorname{Re} \alpha>\max \left\{\frac{1-n}{4}+\frac{3-n}{2 p}, \frac{1-n}{p}\right\} \text {. }
	$$
	However, this range is still not optimal, even though it is much better than that of (\ref{rangep}). Recently, in dimension two, Liu, Shen, Song and Yan \cite[Theorem 1.1]{LSSY2023} further improved the range of $\alpha$ to the following
	$$
	\operatorname{Re} \alpha > \max \left\{\frac{1}{p}-\frac{1}{2},-\frac{1}{p}\right\},
	$$
	which is pretty much close to the sharpest.

	The purpose of this paper is to provide some extension of the above results in the noncommutative setting. Before presenting the results, we briefly review some related developments on noncommutative maximal inequalities. Due to the lack of maximal functions, noncommutative maximal inequalities have been  mysterious  for quite a long time until the appearance of Pisier's vector-valued noncommutative $L_p$-spaces (see \cite{P1998}). The core idea of Pisier is that instead of noncommutative maximal functions, one could define the $L_p(\mathcal{M};\ell_{\infty})$-norm of the sequence of operators $(f_n)_n$,  which exactly corresponds to the $L_p$-norm of the maximal function $\sup_{n}|f_n|$ in the commutative case. This definition was later extended to more general case by Junge \cite{J2002} in 2002, which makes  noncommutative Doob inequalities \cite{J2002}, noncommutative Hardy-littlewood maximal inequalities \cite{M2007}, noncommutative maximal inequalities for ergodic averages \cite{JX2007} and truncated operator-valued singular integrals  \cite{HLX2022} possible. For noncommutative generalizations of other classical results, we refer to \cite{CXY2013, GJP2017, GJP, HWW, JM2010, JM2012, JMP2014, JMP2018, JMPX2021, PRS, XXX2016, XXY2018, Y1977}.
	
	Let $\mathcal{M}$ be a von Neumann algebra equipped with a normal semifinite faithful trace $\tau$. Denote by $\mathcal{N}=L_{\infty}\left(\mathbb{R}^n\right) \overline{\otimes} \mathcal{M}$ the tensor von Neumann algebra equipped with the tensor trace $\int_{\mathbb{R}^n} d x \otimes \tau$. Let $L_p(\mathcal{M})$, $L_p(\mathcal{N})$ be the noncommutative $L_p$ spaces associated to the pairs $(\mathcal{M}, \tau)$ and $(\mathcal{N}, \int_{\mathbb{R}^n} d x \otimes \tau)$, respectively. 
	We consider the noncommutative Stein's spherical means defined as
	$$
	\widehat{\mathscr{M}_{t}^{\alpha}f}(\xi)
	=
	\widehat{m_{\alpha}}(\xi t)\widehat{f}(\xi), \quad f\in \mathcal{S}(\mathbb{R}^n;L_p(\mathcal{M})).
	$$
	Here, $\mathcal{S}(\mathbb{R}^n;L_p(\mathcal{M}))$ stands for the class of $L_p(\mathcal{M})$-valued Schwartz functions on $\mathbb{R}^n$.
	Motivated by  \cite{MYZ2017, LSSY2023},
	we give bounded results for noncommutative Stein's maximal spherical means based on the following assumption on noncommutative Fourier integral operators.
	\begin{assumption}\label{assum}
		Let $1<p<\infty$, $s_p=(n-1)\big|\frac{1}{2}-\frac{1}{p}\big|$ and
		$$
		\bar{p}_n:=
		\begin{cases}
			\frac{2(n+1)}{n-1}, & \text { if } n \mbox { is odd,} \\
			\frac{2(n+2)}{n}, & \text { if } n \mbox { is even.}
		\end{cases}
		$$
		Fix $\rho_{0}\in C_{0}^{\infty}((1,2))$ and $\rho_1 \in C_0^{\infty}\left(\mathbb{R}^n \times I\right)$, where $I$ is a compact interval. Let $a\in C^{\infty}\big(\mathbb{R}^{n}\setminus0\big)$ be homogeneous of degree $0$. For $x\in \mathbb R^n$, $t\in\mathbb R$, define
		\begin{equation*}
			F_{j}f(x,t)
			=
			\rho_{1}(x,t)\int_{\mathbb{R}^{n}}e^{i\langle x,\xi\rangle}e^{it|\xi|}\rho_{0}\big(|2^{-j}\xi|\big)a(\xi)\hat{f}(\xi)d\xi,\quad f\in \mathcal{S}(\mathbb{R}^n;L_p(\mathcal{M})).
		\end{equation*}
		Then for $j > 0$, $p\geq \bar{p}_{n}$, $\sigma<1/p$,
		\begin{equation}\label{conjecFIO}
			\left\| F_jf\right\|_{L^{p}(\mathcal{A})}\lesssim2^{u j}\|f\|_{L^p(\mathcal{N})}
		\end{equation}
		where $\mathcal{A}:=L_{\infty}(\mathbb{R}^n\times \mathbb{R})\overline{\otimes}\mathcal{M}$ and $u=s_{p}-\sigma$. 
		Especially, if $p=4$, one has $u>\frac{1}{4}(n-2)$.
	\end{assumption}

	The following is the main result of this paper.
	
	\begin{theorem}\label{Main}
		Suppose that (\ref{conjecFIO}) holds.
		\begin{enumerate}[\rm (i)]
			\item\label{main1} Let $n\geq 2$ and $2\leq p\leq \infty$. We have
			\begin{equation}\label{main}
				\left\| \sup_{t>0}{}^+ \mathscr{M}_t^{\alpha}f \right\|_{L_p(\mathcal{N})}
				\lesssim \|f\|_{L_p(\mathcal{N})}
			\end{equation}
			for all $\alpha\in \mathbb{C}$ 
			such that
			\begin{equation}\label{alpha}
				\operatorname{Re}\alpha
				>
				\max\left\lbrace -\frac{n-3}{p}-\frac{1}{2},\, \frac{n-3}{p}-\frac{n}{2}+1\right\rbrace.
			\end{equation}
			\item \label{1} Let $n\geq 2$ and $4\leq p<\infty$. For any $f\in L_p(\mathcal{N})$ and $\alpha\geq 0$, we have 
			$$
			\mathscr{M}^{\alpha}_tf \overset{a.u}{\longrightarrow} f \quad \text{  as  }\quad t\to 0.
			$$
			\item \label{2} Let $n\geq 2$ and $2\leq p<\infty$. Let $\alpha\in \mathbb C$ satisfy (\ref{alpha}). For any $f\in L_p(\mathcal{N})$,
			$$
			\mathscr{M}^{\alpha}_tf \overset{b.a.u}{\longrightarrow} f \quad \text{  as  }\quad t\to 0.
			$$
		\end{enumerate}
	\end{theorem}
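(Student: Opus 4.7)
My plan is to decompose $\mathscr{M}_{t}^{\alpha}$ into dyadic half-wave operators that match the form covered by Assumption~\ref{assum}, control the supremum in $t$ by a noncommutative Sobolev embedding in the dilation parameter, and enlarge the admissible range of $\alpha$ by Stein's analytic interpolation. The convergence statements in (ii) and (iii) will then follow from the resulting maximal inequality via the noncommutative Banach principle.

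\textbf{Dyadic half-wave reduction.} I would fix a Littlewood--Paley resolution $1=\beta_{0}(\xi)+\sum_{j\ge1}\beta(2^{-j}\xi)$ and split $\widehat{\mathscr{M}_{t}^{\alpha}f}(\xi)$ into a low-frequency piece and high-frequency pieces supported in $|\xi t|\sim 2^{j}$. The low-frequency piece has a smooth Schwartz-type kernel and is dominated by the noncommutative Hardy--Littlewood maximal operator of Mei~\cite{M2007}. For each high-frequency piece I invoke the Bessel asymptotic
\[
\mathcal{J}_{\nu}(r)=r^{-1/2}\bigl(e^{ir}h_{+}(r)+e^{-ir}h_{-}(r)\bigr),\qquad |h_{\pm}^{(k)}(r)|\lesssim r^{-k},
\]
to rewrite $\widehat{m_{\alpha}}(\xi t)\beta(2^{-j}|\xi t|)$ as $e^{it|\xi|}a^{\alpha,j}_{+}(\xi)+e^{-it|\xi|}a^{\alpha,j}_{-}(\xi)$ with $a^{\alpha,j}_{\pm}$ supported in $|\xi|\sim 2^{j}/t$ and of symbol size $2^{-j(\frac{n-1}{2}+\operatorname{Re}\alpha)}$. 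Localizing $t$ to a bounded interval by scaling, each piece becomes (up to that prefactor) exactly an operator of the form $F_{j}$ in Assumption~\ref{assum}.

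\textbf{Maximal control and interpolation.} To promote a fixed-$t$ operator bound to the Pisier--Junge norm $\|\sup_{t>0}{}^{+}\cdot\|_{L_{p}(\mathcal{N})}$, I would invoke a noncommutative Sobolev embedding of the schematic form
\[
\Big\|\sup_{t\in I}{}^{+}G(\cdot,t)\Big\|_{L_{p}(\mathcal{N})}
\lesssim \|G\|_{L_{p}(\mathcal{N}\overline{\otimes} L_{\infty}(I))}^{1-1/p}\,\|\partial_{t}G\|_{L_{p}(\mathcal{N}\overline{\otimes} L_{\infty}(I))}^{1/p}+(\text{lower order}),
\]
applied on each dyadic time window. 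A $\partial_{t}$ hits the phase $e^{\pm it|\xi|}$ and brings down a factor $|\xi|\sim 2^{j}$, so the Sobolev step costs $2^{j/p}$. Combined with the FIO bound $2^{j(s_{p}-\sigma)}$ from Assumption~\ref{assum} (taking $\sigma$ arbitrarily close to $1/p$) and the symbol prefactor $2^{-j(\frac{n-1}{2}+\operatorname{Re}\alpha)}$, summation over $j\ge 1$ delivers the estimate at $p=\bar{p}_{n}$ along one of the two branches in \eqref{alpha}. The complementary branch is then obtained by Stein's analytic interpolation in the complex parameter $\alpha$: using the above bound at $p=\bar{p}_{n}$ as one endpoint together with either the trivial estimate at $\operatorname{Re}\alpha=1$ (where $\mathscr{M}_{t}^{\alpha}$ reduces to an averaging operator bounded on $\mathcal{N}$) for large $p$, or a Plancherel/$g$-function $L_{2}$-bound driven by the symbol decay $|\widehat{m_{\alpha}}(\xi)|\lesssim(1+|\xi|)^{-\frac{n-1}{2}-\operatorname{Re}\alpha}$ for $p$ down to $2$.

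\textbf{Convergence and main obstacle.} For (ii) and (iii), once \eqref{main} is in hand, the convergence at $t\to 0$ follows by the standard two-step argument: on the dense subclass $\mathcal{S}(\mathbb{R}^{n})\otimes L_{p}(\mathcal{M})$ the smoothness and normalization of $\widehat{m_{\alpha}}$ near the origin give pointwise-in-$t$ norm convergence, and the maximal inequality together with the noncommutative Banach principle upgrades this to the asserted almost uniform (respectively bilateral almost uniform) convergence on all of $L_{p}(\mathcal{N})$. The stronger a.u.\ conclusion of (ii) requires positivity of the kernel, forcing $\alpha\ge 0$; the threshold $p\ge 4$ enters because the one-sided noncommutative a.u.\ scheme passes through a column/row square-function bound that is only available at $p\ge 4$. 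The hard part will be the noncommutative Sobolev-in-$t$ step: one cannot take pointwise moduli, so the embedding must be implemented through Pisier's injective tensor representation $\sup_{t}^{+}f_{t}=afb$ with $a,b\in L_{2p}(\mathcal{M})$, and the operator square function that appears in place of $|\partial_{t}F|$ must be synchronized with the oscillatory FIO estimate of Assumption~\ref{assum} at the endpoint $p=\bar{p}_{n}$, where the regularity hypothesis $\sigma<1/p$ is tight.
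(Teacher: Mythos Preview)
Your architecture---dyadic Littlewood--Paley decomposition, Bessel asymptotics to reduce to half-wave pieces, a Sobolev-in-$t$ bound to pass from fixed-$t$ FIO estimates to the maximal norm, then interpolation---matches the paper's. The genuine gap is precisely the step you flag as ``the hard part'': the noncommutative Sobolev embedding $\|\sup_{t}{}^{+}G\|_{p}\lesssim\|G\|_{p}^{1-1/p}\|\partial_{t}G\|_{p}^{1/p}$ is not available at a general exponent, because one cannot differentiate $|G(t)|^{p}$ when $p$ is not an even integer and there is no pointwise modulus to fall back on. Your plan to run this at $p=\bar p_{n}$ therefore breaks down for $n\ge4$ (where $\bar p_{n}<4$ is not even), and the injective-tensor/column square-function route you sketch does not, as far as is known, synchronize with the FIO bound in the way you need.

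The paper sidesteps this by proving the Sobolev step \emph{only} at the even integers $p=2$ and $p=4$, via the elementary algebraic identity $|G(t)|^{2m}-|G(1)|^{2m}=\int_{1}^{t}\partial_{s}|G(s)|^{2m}\,ds$ and a Leibniz expansion of $\partial_{s}(G^{*}G)^{m}$ (their Corollary after Lemma~4.1). Assumption~\ref{assum} is invoked only at $p=4$; the $p=2$ endpoint uses Plancherel, and $p=\infty$ is handled by a direct $L_{1}$ kernel bound (no Sobolev at all). The full range \eqref{alpha} then comes from the Junge--Xu noncommutative Marcinkiewicz interpolation in $p$ between these three endpoints, not from Stein interpolation in $\alpha$. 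This is why the stated threshold \eqref{alpha} is exactly the one produced by the $p=4$ pivot (and is sharp only for $n=2,3$, where $\bar p_{n}=4$). Your reading of parts~(ii)--(iii) is essentially correct: the paper uses density plus the maximal inequality, and the $p\ge4$ restriction in~(ii) comes from the Kadison--Schwarz-type bound $|\mathscr{M}_{t}^{\alpha}f|^{2}\lesssim\mathscr{M}_{t}^{\alpha}(|f|^{2})$ (valid for $\alpha\ge0$) followed by the strong-type maximal inequality applied to $|f-g|^{2}\in L_{p/2}$, which forces $p/2\ge2$.
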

	Assumption \ref{assum} can be considered as the noncommutative counterpart  of the local smoothing estimates (see for instance \cite[Conjecture 1.3]{GLMX2023}).  In classical setting, the local smoothing estimates have been extensively studied in numerous papers, see for instance \cite{BD2015, GS2010, GWZ2020, LW2002, MSS1992, MYZ2017, S2017, W2000} and the references therein. To make our result more convincible, we mention here that Hong, Lai and Wang \cite{HLW} recently obtained a noncommutative version of  local smoothing estimate which can be summarized as follows:
	$$\left\|F_j f(x, t)\right\|_{L_p(\mathcal{A})}\lesssim 2^{\gamma j}\|f\|_{L_p(\mathcal{N})}$$ with
	$$
	\gamma
	>
	\begin{cases}
		\frac{1}{2}\left(\frac{1}{2}-\frac{1}{p}\right),\,2<p \leq 4,\\
		\frac{1}{2}\left(1-\frac{3}{p}\right),\,4 \leq p<\infty.
	\end{cases}
	$$
	%
	This serves as a nice example of Assumption \ref{assum} and illustrates that our maximal inequality holds true in this setting.
	We will focus on improving the results of noncommutative version of local smoothing estimates by verifying Assumption \ref{assum} in-depth in our future work. This will be an interesting direction with lots of applications.
	
	The paper is organized as follows. In section \ref{sec2}, we review some preliminary notation and results on the noncommutative $L_p$-spaces, maximal inequalities and pointwise convergence. In section \ref{sec3}, we reduce the proof of our main result to that of Proposition \ref{lemma12}. 
	Section \ref{sec4} is devoted to the proof of Proposition \ref{lemma12}.
	
	Throughout this paper, the letter $C$ stands for a positive finite constant which is independent of the essential variables, not necessarily the same one in each occurrence. The notion $A \lesssim B$ means that $A \leq C B$ for some constant $C$, and $A \approx B$ means that $A \lesssim B$ and $B \lesssim A$. By the notation $C_{\varepsilon}$ we mean that the constant depends on the parameter $\varepsilon$.
	Let $\mathbb{Z}_{+}$ denote the set of all nonnegative integers.
	For a function $f$ on $\mathbb{R}^n$, define $\hat{f}$  and  $\check{f}$  the Fourier transform and the inversion Fourier transform of $f$ by
	$$
	\hat{f}(\xi)
	=
	\int_{\mathbb{R}^n} e^{-2 \pi i\langle x, \xi\rangle} f(x) d x, \quad
	\check{f}(\xi)
	=
	\int_{\mathbb{R}^d} e^{2 \pi i\langle x, \xi\rangle} f(x) d x.
	$$

	\section{Preliminaries}\label{sec2}
	In this section, we introduce some basic definitions and facts related to noncommutative $L_p$ spaces, maximal functions and pointwise convergence.
	
	Let $\mathcal{M}$ denote a semifinite von Neumann algebra equipped with a normal semifinite faithful (abbrieviated as n.s.f.) trace $\tau$. Let $\mathcal{M}_{+}$ be the positive part of $\mathcal{M}$ and $\mathcal{S}_{\mathcal{M}+}$ be the set of all $x \in \mathcal{M}_{+}$ such that $\tau(\operatorname{supp}(x))<\infty$, where $\operatorname{supp}(x)$ denotes the support of $x$. Let $\mathcal{S}_{\mathcal{M}}$ be the linear span of $\mathcal{S}_{\mathcal{M}_+}$. Then $\mathcal{S}_{\mathcal{M}}$ is a $w^*$-dense $*$-subalgebra of $\mathcal{M}$. Given $1 \leq p<\infty$ and $x \in \mathcal{S}_{\mathcal{M}}$, we define
	$$
	\|x\|_p=\left(\tau\left(|x|^p\right)\right)^{1 / p},
	$$
	where $|x|=\left(x^* x\right)^{1 / 2}$ is the modulus of $x$. Then $\|\cdot\|_p$ is a norm on $\mathcal{S}_{\mathcal{M}}$ and $\left(\mathcal{S}_{\mathcal{M}},\|\cdot\|_p\right)$ forms a normed space. The completion of $\left(\mathcal{S}_{\mathcal{M}},\|\cdot\|_p\right)$ is the so-called noncommutative $L_p$ space associated with $(\mathcal{M}, \tau)$, denoted by $L_p(\mathcal{M})$. As usual, we set $L_{\infty}(\mathcal{M})=\mathcal{M}$ equipped with the operator norm $\|\cdot\|_{\mathcal{M}}$ and denote by $L_p^+(\mathcal{M})$ the positive part of $L_p(\mathcal{M})$. Let $L_0(\mathcal{M})$ denote the space of all closed densely defined operators on $H$ measurable with respect to $(\mathcal{M}, \tau)$, where $H$ is the Hilbert space on which $\mathcal{M}$ acts. Then the elements of $L_p(\mathcal{M})$ can be viewed as closed densely defined operators on $H$. For $a, b \in \mathcal{M}$ and $0<\delta<1$, the following monotone properties are valid:
	\begin{enumerate}
		\item $\|a\|_{L_p(\mathcal{M})} \leq\|b\|_{L_p(\mathcal{M})}$, if $0\leq a \leq b$;
		\item $a^\delta \leq b^\delta$, if $0 \leq a \leq b$ and $0<\delta<1$.
	\end{enumerate}
	We refer the reader to \cite{FK1986, PX2003} for more basic properties such as Minkowski's inequality, H\"{o}lder's inequality, dual property, real and complex interpolation  on noncommutative $L_p$-spaces.
	
	Throughout the paper, we denote by $\mathcal N$  the  von Neumann algebra $L_{\infty}(\mathbb{R}^n) \overline{\otimes} \mathcal{M}$  equipped with the tensor trace $\int_{\mathbb{R}^n} d x \otimes \tau$. Since $L_2(\mathcal{M})$ is a Hilbert space, it follows from the vector-valued Plancherel theorem that
	\begin{equation}\label{plan}
		\|\widehat{f}\|_{L_2(\mathcal{N})}=\|\widehat{f}\|_{L_2(\mathbb R^n, L_2(\mathcal M))}=\|f\|_{L_2(\mathbb R^n, L_2(\mathcal M))}=\|f\|_{L_2(\mathcal{N})},
	\end{equation}
	This fact will be used in the sequel.
	
	We now turn to introduce noncommutative vector-valued spaces.
	Following Pisier \cite{P1998} and Junge \cite{J2002}, we give the following definition.
	\begin{definition}
		Let $I$ be an index set. Given $1 \leq p \leq \infty$, we define $L_p(\mathcal{M} ; \ell_{\infty}(I))$ as the space of all families $(x_i)_{i \in I}$ in $L_p(\mathcal{M})$ which can be factorized as $x_i=a y_i b$ with $a, b \in L_{2 p}(\mathcal{M})$ and $(y_i)_{i \in I} \subset L_{\infty}(\mathcal{M})$. The norm of $(x_i)_{i\in I}$ in $L_p(\mathcal{M} ; \ell_{\infty}(I))$ is defined as
		$$
		\left\|\left(x_i\right)_{i \in I}\right\|_{L_p\left(\mathcal{M} ; \ell_{\infty}(I)\right)}=\inf \left\{\|a\|_{L_{2p}(\mathcal{M})} \sup_{i \in I}\left\|y_i\right\|_{L_{\infty}(\mathcal{M})}\|b\|_{L_{2 p}(\mathcal{M})}\right\},
		$$
		where the infimum is taken over all factorizations as above.
	\end{definition}
	
	For simplicity, we denote $\|(x_i)_{i\in I}\|_{L_p(\mathcal{M} ; \ell_{\infty}(I))}$ by  $\|\sup _{i\in I}^{+} x_i\|_{L_p(\mathcal{M})}$ in the following. Note that  $\sup _{i\in I}^{+} x_i$ means nothing but just a notation since $\sup _{i\in I} x_i$ does not make any sense in the noncommutative setting.

	Let $x=(x_i)_{i \in I}$ be a sequence of positive operators in $L_p(\mathcal{M})$. Then $x=(x_i)_{i \in I}$ belongs to $L_p\left(\mathcal{M} ; \ell_{\infty}(I)\right)$ if and only if there is $a \in L_p^+(\mathcal{M})$ such that $0 < x_i \leq a$ for all $i \in I$. Moreover, in this case,
	$$
	\|x\|_{L_p\left(\mathcal{M} ; \ell_{\infty}(I)\right)}
	=
	\inf \big\{\|a\|_{L_p(\mathcal{M})}: a \in L_p^+(\mathcal{M}) \text{ such that } 0 < x_i \leq a, \forall i \in I\big\} .
	$$
	Similarly if $(x_i)_{i \in I}$ is a sequence of self-adjoint operators in $L_p(\mathcal{M})$. Then $x=(x_i)_{i \in I}$ belongs to $L_p\left(\mathcal{M} ; \ell_{\infty}(I)\right)$ if and only if there is $a \in L_p^+(\mathcal{M})$ such that $-a \leq x_i \leq a$ for all $i\in I$, and
	$$
	\|x\|_{L_p\left(\mathcal{M} ; \ell_{\infty}(I)\right)}
	=
	\inf \left\{\|a\|_p: a \in L_p^{+}(\mathcal{M}) \text{ such that }-a \leq x_i \leq a, \forall i \in I\right\} .
	$$
	These properties can be found in \cite[Remark 4.1]{CXY2013}.
	We now recall a lemma which will be used in the proof of our main result (see e.g. \cite[Theorem 4.3]{CXY2013} or \cite[Lemma 5.1]{HLX2022}) .
	
	\begin{lemma}\label{lem_psi}
		Let $\psi$ be a non-negative radial function on $\mathbb{R}^n$ such that $\psi(x) \lesssim \frac{1}{(1+|x|)^{n+\delta}}$ for $x \in \mathbb{R}^n$ with some $\delta>0$. Let $\psi_{t}(x)=\frac{1}{t^n} \psi\left(\frac{x}{t}\right)$ for $x \in \mathbb{R}^n$ and $t>0$. Let $1<p \leq \infty$. Then for $f \in L_p(\mathcal{N})$,
		$$
		\left\|\sup _{t>0}{}^{+} \psi_{t} * f\right\|_{L_p(\mathcal{N})}
		\lesssim
		\left\|\sup _{t>0}{}^{+} M_{t} f\right\|_{L_p(\mathcal{N})}
		\lesssim
		\left\| f\right\|_{L_p(\mathcal{N})}
		,
		$$
		where $M_t$ is the Hardy-Littlewood averaging operator
		$$
		M_t f(x)
		=
		\frac{1}{t^n}\int_{|x-y|\leq t}f(y)dy.
		$$
	\end{lemma}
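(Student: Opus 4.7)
The strategy is to handle the two inequalities in the chain independently. The right-hand inequality $\|\sup_{t>0}^{+} M_t f\|_{L_p(\mathcal{N})} \lesssim \|f\|_{L_p(\mathcal{N})}$ for $1<p\leq\infty$ is exactly the noncommutative Hardy--Littlewood maximal inequality of Mei \cite{M2007}, which I will invoke as a black box. The left inequality is the real content, and I will prove it by exhibiting, for positive $f$, a single element $a\in L_p^{+}(\mathcal{N})$ that simultaneously majorizes the family $\{\psi_t*f:t>0\}$ and has norm controlled by $\|\sup_{s>0}^{+}M_sf\|_{L_p(\mathcal{N})}$, then reading off the bound from the positive-family characterization of $\|\cdot\|_{L_p(\mathcal{M};\ell_\infty)}$ recalled above.

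Since $\psi(x)\lesssim (1+|x|)^{-n-\delta}$, I fix once and for all the radially decreasing integrable majorant $\Psi(x)=C_0(1+|x|)^{-n-\delta}$, which satisfies $\psi\leq\Psi$ pointwise and lies in $L_1(\mathbb{R}^n)$. Writing $\Psi(x)=\phi(|x|)$ with $\phi$ smooth and strictly decreasing, the layer-cake representation gives
\begin{equation*}
\Psi(x) = \int_{0}^{\infty}\chi_{B(0,r)}(x)\,d\mu(r), \qquad d\mu(r) := -\phi'(r)\,dr \geq 0.
\end{equation*}
Rescaling to $\Psi_t(x)=t^{-n}\Psi(x/t)$ and applying Fubini then yields the scalar-in-$r$ identity
\begin{equation*}
\Psi_t * f(x) = c_n \int_{0}^{\infty} r^{n}\, M_{rt}f(x)\, d\mu(r),
\end{equation*}
where $c_n$ is the volume of the unit ball and the total mass $\int_{0}^{\infty}r^n\,d\mu(r)$ is proportional to $\|\Psi\|_{L_1(\mathbb{R}^n)}$ and hence finite. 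Because the only $r$-dependence is scalar and every step preserves positivity, this identity lifts verbatim to operator-valued $f\in L_p^{+}(\mathcal{N})$, with the integral understood in the Bochner sense on $L_p(\mathcal{M})$ for each fixed $x$.

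To conclude, take $f\in L_p^{+}(\mathcal{N})$. By the positive-family characterization of the maximal norm (cf.\ \cite[Remark~4.1]{CXY2013}), for any $\varepsilon>0$ there exists $a\in L_p^{+}(\mathcal{N})$ with $0\leq M_s f\leq a$ for all $s>0$ and $\|a\|_{L_p(\mathcal{N})}\leq (1+\varepsilon)\|\sup_{s>0}^{+}M_s f\|_{L_p(\mathcal{N})}$. Substituting into the displayed identity and using $\psi\leq\Psi$ gives, uniformly in $t>0$, the operator pointwise bound $\psi_t*f \leq \Psi_t*f \leq C\,a$, so the same characterization yields $\|\sup_{t>0}^{+}\psi_t*f\|_{L_p(\mathcal{N})}\leq C\|a\|_{L_p(\mathcal{N})}$, and letting $\varepsilon\to 0$ finishes the positive case. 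A general $f$ is reduced to four positive pieces via $f=(f_1-f_2)+i(f_3-f_4)$ together with the triangle inequality for the Pisier norm. No deep obstruction appears; the only subtlety worth noting is the absence of a genuine pointwise supremum in the noncommutative setting, which is precisely why one must commit to the single majorant $a$ before carrying out the integration in $r$, rather than attempting to take the supremum over $t$ after the fact.
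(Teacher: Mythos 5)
Your proof is essentially correct, and since the paper does not prove this lemma but simply cites \cite[Theorem 4.3]{CXY2013} and \cite[Lemma 5.1]{HLX2022}, the comparison has to be against the standard argument behind those references, which is exactly the layer-cake majorization you use: bound $\psi$ by a radially decreasing integrable $\Psi$, write $\Psi$ as a superposition of ball indicators, transfer the commutative pointwise identity $\Psi_t*f=\int_0^\infty r^n M_{rt}f\,d\mu(r)$ to positive operator-valued $f$ via Bochner integration, and commit to a single majorant $a$ of $(M_sf)_s$ before integrating in $r$. Two small points worth tidying. First, with the paper's normalization $M_tf(x)=t^{-n}\int_{|x-y|\le t}f(y)\,dy$ (no ball-volume factor) there is no $c_n$ in the identity; the correct statement is $\Psi_t*f(x)=\int_0^\infty r^n M_{rt}f(x)\,d\mu(r)$, and the total mass $\int_0^\infty r^n\,d\mu(r)=n\int_0^\infty r^{n-1}\phi(r)\,dr$ is indeed comparable to $\|\Psi\|_{L_1}$ after an integration by parts. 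Second, your reduction $f=(f_1-f_2)+i(f_3-f_4)$ combined with subadditivity of the $L_p(\ell_\infty)$ norm only delivers the end-to-end bound $\|\sup_{t>0}^+\psi_t*f\|_p\lesssim\sum_k\|\sup^+M_tf_k\|_p\lesssim\|f\|_p$ for general $f$; it does not by itself yield the middle comparison $\|\sup_{t>0}^+\psi_t*f\|_p\lesssim\|\sup_{t>0}^+M_tf\|_p$ except when $f$ is positive, since the positive-family characterization of the maximal norm that the majorization argument relies on is unavailable for signed $f$. This is harmless here because the paper applies the lemma only after reducing to positive $f$, but if you want the full chain as stated you should either restrict it to positive $f$ or note that the two outer quantities are both comparable to $\|f\|_p$ on that range of $p$.
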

	
	We will also need the following lemma. The verification is routine, however we provide the proof here for convenience.
	
	\begin{lemma}\label{norm}
		Let $1<p<\infty$ and $x=(x_t)_{t>0}$ be a sequence of positive operators in $L_p(\mathcal{N};\ell_{\infty})$. Then
		$$
		\left\|\sup_{t>0}{}^+ x_t\right\|_{L_p(\mathcal{N})}^p
		\leq
		\sum_{k \in \mathbb{Z}}\left\|\sup_{t\in I_k}{}^+ x_t\right\|_{L_p(\mathcal{N})}^p, \quad I_k:=[2^{-k},2^{-k+1}],\quad k\in\mathbb{Z}.
		$$
	\end{lemma}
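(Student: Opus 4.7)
The plan is to exploit the infimum characterization of the $L_p(\mathcal{N};\ell_\infty)$-norm for sequences of positive operators that was recalled just before the statement: for positive $(x_t)_{t>0}$,
$$
\Big\|\sup_{t>0}{}^+ x_t\Big\|_{L_p(\mathcal{N})}
=
\inf\bigl\{\|a\|_{L_p(\mathcal{N})}: a\in L_p^+(\mathcal{N}),\ x_t\le a \ \text{for all } t>0\bigr\}.
$$
So the task reduces to building, out of local dominators on each dyadic interval $I_k$, a single global dominator whose $L_p^p$-norm is the sum of their $L_p^p$-norms. We may assume the right-hand side of the claimed inequality is finite (otherwise there is nothing to prove).

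First, fix $\varepsilon>0$. For each $k\in\mathbb{Z}$, invoke the above characterization on $I_k$ to choose $a_k\in L_p^+(\mathcal{N})$ with $x_t\le a_k$ for every $t\in I_k$ and
$$
\|a_k\|_{L_p(\mathcal{N})}^p
\le
\Big\|\sup_{t\in I_k}{}^+ x_t\Big\|_{L_p(\mathcal{N})}^p + \varepsilon\, 2^{-|k|-1}.
$$
Then set $a:=\bigl(\sum_{k\in\mathbb{Z}} a_k^p\bigr)^{1/p}$. The partial sums of $\sum_k a_k^p$ form an increasing net in $L_1^+(\mathcal{N})$ with uniformly bounded trace $\sum_k\|a_k\|_{L_p(\mathcal{N})}^p<\infty$, so $\sum_k a_k^p$ converges in $L_1^+(\mathcal{N})$ and $a$ is a well-defined element of $L_p^+(\mathcal{N})$ with $\tau(a^p)=\sum_k\tau(a_k^p)$.

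Next, for each fixed $j\in\mathbb{Z}$, since every other summand is positive we have $a^p\ge a_j^p$ in $L_1(\mathcal{N})$. Because $1/p\in(0,1]$, the function $s\mapsto s^{1/p}$ is operator monotone on $[0,\infty)$, hence $a\ge a_j$. Therefore, for every $t>0$, choosing $j$ with $t\in I_j$ yields $x_t\le a_j\le a$, so $a$ is a global dominator.

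Applying the infimum characterization to $(x_t)_{t>0}$ then gives
$$
\Big\|\sup_{t>0}{}^+ x_t\Big\|_{L_p(\mathcal{N})}^p
\le \|a\|_{L_p(\mathcal{N})}^p
=\tau(a^p)
=\sum_{k\in\mathbb{Z}}\tau(a_k^p)
\le \sum_{k\in\mathbb{Z}}\Big\|\sup_{t\in I_k}{}^+ x_t\Big\|_{L_p(\mathcal{N})}^p + \varepsilon,
$$
and letting $\varepsilon\to 0$ finishes the proof. The only point that is not purely formal is the passage from $a^p\ge a_j^p$ to $a\ge a_j$, which is where operator monotonicity of $s\mapsto s^{1/p}$ is essential; the convergence of $\sum_k a_k^p$ in $L_1^+(\mathcal{N})$, although elementary, is the other detail one must verify carefully so that $a$ genuinely lies in $L_p^+(\mathcal{N})$.
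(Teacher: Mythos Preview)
Your proof is correct and follows essentially the same route as the paper's: choose near-optimal local dominators $a_k$ on each $I_k$, set $a=\bigl(\sum_k a_k^p\bigr)^{1/p}$, check $x_t\le a$, and compute $\|a\|_p^p=\sum_k\|a_k\|_p^p$. If anything, you are more careful than the paper, which asserts $a_k\le a$ without explicitly invoking the operator monotonicity of $s\mapsto s^{1/p}$ and does not comment on the $L_1$-convergence of $\sum_k a_k^p$.
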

	
	\begin{proof}
		Given $\varepsilon>0$, $k\leq 0$, there exists a positive operator $a_k$ such that for all $t\in I_k$, $x_t\leq a_k$ and
		$$
		\|a_k\|_{L_p(\mathcal{N})}^p\leq \left\|\sup_{t\in I_k}{}^+ x_t\right\|_{L_p(\mathcal{N})}^p+2^{k-2}\varepsilon.
		$$
		The same estimate still holds for $k>0$, except that the last term $2^{k-2}\varepsilon$ is replaced by $2^{-k-2}\varepsilon$.
		Set $a=\left(\sum_{k\in\mathbb{Z}}|a_k|^p \right)^{1/p}$. We clearly have
		$x_t\leq a$ for all $t>0$.
		Indeed, for every $t>0$, there exists $k\in \mathbb{Z}$ such that $t\in I_k$. Therefore,
		$$
		x_k\leq a_k\leq \left(\sum_{k\in\mathbb{Z}}|a_k|^p \right)^{1/p}=a.
		$$
		Moreover,
		$$
		\begin{aligned}
			\|a\|_{L_p(\mathcal{N})}^p
			=&
			\sum_{k\in \mathbb{Z}} \|a_k\|_{L_p(\mathcal{N})}^p
			=
			\sum_{k\leq 0}\|a_k\|_{L_p(\mathcal{N})}^p+\sum_{k>0}\|a_k\|_{L_p(\mathcal{N})}^p\\
			\leq&
			\sum_{k\leq 0} \left(\left\|\sup_{t\in I_k}{}^+ x_t \right\|_{L_p(\mathcal{N})}^p +2^{k-2}\varepsilon\right)
			+
			\sum_{k>0} \left(\left\|\sup_{t\in I_k}{}^+ x_t \right\|_{L_p(\mathcal{N})}^p +2^{-k-2}\varepsilon\right)\\
			\leq&
			\sum_{k\in\mathbb{Z}}\left\|\sup_{t\in I_k}{}^+ x_t\right\|_{L_p(\mathcal{N})}^p+\varepsilon.
		\end{aligned}
		$$
		Hence, we conclude that
		$$
		\left\|\sup_{t>0}{}^+ x_t\right\|_{L_p(\mathcal{N})}^p
		\leq
		\sum_{k\in \mathbb{Z}}\left\|\sup_{t\in I_k}{}^+ x_t\right\|_{L_p(\mathcal{N})}^p,
		$$
		which is desired estimate.
	\end{proof}

	The noncommutative Marcinkiewicz interpolation theorem in $L_p(\mathcal{M},\ell_{\infty})$ will play an important role in the study of our maximal inequalities. Recall that this powerful interpolation result was established by Junge and Xu in \cite[Theorem 3.1]{JX2007}. To describe their result, let us introduce the following definition.

	\begin{definition}
		Consider a family of maps $\Phi_t: L_p(\mathcal{M}) \rightarrow L_0(\mathcal{M})$ for $t>0$.
		\begin{enumerate}[\rm (i)]
			\item Let $1\leq p\leq \infty$. We say that $(\Phi_t)_{t>0}$ is of strong-type $(p, p)$ with constant $C$ if
			$$
			\left\|\sup_{t>0}{}^+ \Phi_t(x)\right\|_p \leq C\|x\|_p, \quad x \in L_p(\mathcal{M}) .
			$$
			\item Let $1\leq p<\infty$. We say that $(\Phi_t)_{t>0}$ is of weak-type $(p, p)$ with constant $C$ if for any $x \in L_p(\mathcal{M})$ and any $\lambda>0$ there is a projection $e \in \mathcal{M}$ such that
			$$
			\left\|e \Phi_t(x) e\right\|_{\infty} \leq \lambda, \quad \forall\,t>0 \quad \text { and } \quad \tau\left(e^{\perp}\right) \leq\left[C \frac{\|x\|_p}{\lambda}\right]^p .
			$$
		\end{enumerate}
	\end{definition}
	
	\begin{remark}
		It is not hard to see that for any $1\leq p<\infty$, strong-type $(p, p)$ implies weak-type $(p, p)$.
	\end{remark}
	
	Now, the noncommutative Marcinkiewicz interpolation theorem due to Junge and Xu \cite[Theorem 3.1]{JX2007} can be formulated as follows.
	
	\begin{lemma}\label{interpolation}\cite[Theorem 3.1]{JX2007}
		Let $1 \leq p_0<p_1 \leq \infty$. Let $S=(S_t)_{t >0}$ be a family of maps from $L_{p_0}^{+}(\mathcal{M})+$ $L_{p_1}^{+}(\mathcal{M})$ into $L_0^{+}(\mathcal{M})$. Assume that $S$ is subadditive in the sense that $S_t(x+y) \leq S_t(x)+S_t(y)$ for all $t>0$. If $S$ is of weak type $(p_0, p_0)$ with constant $C_0$ and of type $(p_1, p_1)$ with constant $C_1$, then $S$ is of type $(p, p)$ for any $p_0<p<p_1$.
		Moreover, if $p_1 \geq 2 p_0$ (a condition that is almost always satisfied in application), the type $(p, p)$ constant $C_p$ of $S$ is controlled by
		$$
		C_p \leq C C_0^{1-\theta} C_1^\theta\left(\frac{1}{p_0}-\frac{1}{p}\right)^{-2}\left(\frac{1}{p}-\frac{1}{p_1}\right)^{-1},
		$$
		where $\theta$ is determined by $1 / p=(1-\theta) / p_0+\theta / p_1$ and $C$ is a universal constant.
	\end{lemma}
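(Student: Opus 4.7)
The plan is, for each $x \in L_p^+(\mathcal{M})$, to construct an operator $a \in L_p^+(\mathcal{M})$ with $S_t(x) \leq a$ for every $t>0$ and $\|a\|_p$ dominated by the right-hand side of the stated bound; by the characterization of the $L_p(\mathcal{M};\ell_\infty)$-norm for positive sequences recalled just after the definition of these spaces, this yields the asserted type-$(p,p)$ inequality. Since the hypotheses concern only positive inputs, we may work directly with $x \geq 0$. The strategy is a noncommutative Calder\'on--Zygmund decomposition at each dyadic scale, followed by a Cuculescu-style patching of the resulting projections into a single spectrally defined dominating operator $a$.

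Given $\lambda>0$, decompose $x$ via its spectral projections as $x = g_\lambda + b_\lambda$, where $g_\lambda := x \, e_{[0,\lambda]}(x) \in L_{p_1}^+(\mathcal{M})$ and $b_\lambda := x \, e_{(\lambda,\infty)}(x) \in L_{p_0}^+(\mathcal{M})$. The strong-type $(p_1,p_1)$ hypothesis applied to $g_\lambda$ gives $A_\lambda \in L_{p_1}^+(\mathcal{M})$ with $S_t(g_\lambda) \leq A_\lambda$ for all $t$ and $\|A_\lambda\|_{p_1} \leq C_1 \|g_\lambda\|_{p_1}$; setting $h_\lambda := e_{[0,\lambda]}(A_\lambda)$ gives $h_\lambda A_\lambda h_\lambda \leq \lambda h_\lambda$ and $\tau(h_\lambda^\perp) \leq \lambda^{-p_1}\|A_\lambda\|_{p_1}^{p_1}$ by Chebyshev. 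The weak-type $(p_0,p_0)$ hypothesis applied at level $\lambda$ to $b_\lambda$ yields a projection $f_\lambda$ with $\|f_\lambda S_t(b_\lambda) f_\lambda\|_\infty \leq \lambda$ uniformly in $t$ and $\tau(f_\lambda^\perp) \leq C_0^{p_0} \lambda^{-p_0} \|b_\lambda\|_{p_0}^{p_0}$. Put $q_\lambda := f_\lambda \wedge h_\lambda$. The inclusions $q_\lambda \leq h_\lambda$, $q_\lambda \leq f_\lambda$ and subadditivity then give
$$
q_\lambda S_t(x) q_\lambda \leq q_\lambda A_\lambda q_\lambda + q_\lambda S_t(b_\lambda) q_\lambda \leq 2\lambda q_\lambda
$$
for every $t$, while $\tau(q_\lambda^\perp) \leq \tau(f_\lambda^\perp) + \tau(h_\lambda^\perp) \lesssim C_0^{p_0}\lambda^{-p_0}\|b_\lambda\|_{p_0}^{p_0} + C_1^{p_1}\lambda^{-p_1}\|g_\lambda\|_{p_1}^{p_1}$.

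Next I would patch $\{q_{2^k}\}_{k\in\mathbb{Z}}$ into an increasing family $\{\tilde q_k\}$ with $\tilde q_k \leq q_{2^k}$ and $\tilde q_{k-1} \leq \tilde q_k$ by a Cuculescu-type iteration. The spectrally defined operator $a := C \sum_{k} 2^{k}(\tilde q_k - \tilde q_{k-1})$ then dominates every $S_t(x)$: iterating the $2\times 2$ positivity inequality $y \leq 2(\tilde q_k y \tilde q_k + \tilde q_k^\perp y \tilde q_k^\perp)$ across neighbouring levels reduces the domination on each spectral slab $\tilde q_k - \tilde q_{k-1}$ to $\tilde q_k S_t(x) \tilde q_k \leq 2^{k+1}\tilde q_k$, which is built into the construction. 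The distribution-function formula gives $\|a\|_p^p \approx \sum_k 2^{kp}\tau(\tilde q_k^\perp)$; substituting the bound on $\tau(q_{2^k}^\perp)$ and applying Fubini against the spectral measure of $x$ converts $\sum_k 2^{k(p-p_0)}\|b_{2^k}\|_{p_0}^{p_0}$ and $\sum_k 2^{k(p-p_1)}\|g_{2^k}\|_{p_1}^{p_1}$ into $\|x\|_p^p$ with constants $(1/p_0 - 1/p)^{-1}$ and $(1/p - 1/p_1)^{-1}$ respectively.

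The principal obstacle will be the Cuculescu-style patching in the third step: in the commutative setting the $q_{2^k}$ are indicators on sub-level sets of $x$ and are automatically nested, but here $f_{2^k}$ comes from $S_t(b_{2^k})$, not from $x$, and the projections at different scales need not commute. Building a genuinely nested $\tilde q_k$ whose traces retain the expected tail bound costs an extra geometric factor per iteration; this is precisely what produces the squared factor $(1/p_0 - 1/p)^{-2}$ in $C_p$, as opposed to the single power suggested by the naive distribution-function integration. The hypothesis $p_1 \geq 2 p_0$ is the structural input ensuring that this geometric cost, summed against the Chebyshev tail coming from the strong-type side, converges with a universal constant independent of $p$ in the full range $p_0 < p < p_1$.
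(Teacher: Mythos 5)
The paper does not prove this lemma; it is cited verbatim from Junge and Xu, \cite[Theorem~3.1]{JX2007}. There is therefore no ``paper's own proof'' to compare against, and your proposal should be judged on its own terms and against the argument in the cited source.

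Your outline is in the right spirit and does resemble the Junge--Xu strategy: decompose $x$ at each dyadic level $\lambda = 2^k$ using spectral projections of $x$ into a bounded ``good'' part in $L_{p_1}$ and a small-trace ``bad'' part in $L_{p_0}$, apply the strong-type hypothesis to the good part and the weak-type hypothesis to the bad part, take the meet $q_\lambda = f_\lambda \wedge h_\lambda$ to get $q_\lambda S_t(x) q_\lambda \leq 2\lambda q_\lambda$ with controlled $\tau(q_\lambda^\perp)$, and then build a spectrally defined dominating operator $a$. The Fubini computation that turns $\sum_k 2^{k(p-p_0)}\|b_{2^k}\|_{p_0}^{p_0}$ and $\sum_k 2^{k(p-p_1)}\|g_{2^k}\|_{p_1}^{p_1}$ into multiples of $\|x\|_p^p$ with the factors $(1/p_0 - 1/p)^{-1}$ and $(1/p - 1/p_1)^{-1}$ is also correct and standard. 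All of this is fine.

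The genuine gap is exactly the step you flag and then wave past: the Cuculescu-style nesting and the claim that $S_t(x) \leq a$ follows from iterating $y \leq 2(q y q + q^\perp y q^\perp)$. First, the compressed bound $q_\lambda S_t(x) q_\lambda \leq 2\lambda q_\lambda$ controls only the diagonal block; passing from it to a genuine operator inequality $S_t(x) \leq a$ across all $t$ is the entire difficulty of the noncommutative setting, precisely because the corners $q_\lambda S_t(x) q_\lambda^\perp$ are not controlled. Second, iterating the two-block inequality over nested levels $\tilde q_{k_0} \leq \tilde q_{k_0+1} \leq \cdots$ multiplies the constant by $2$ at each step: after $m$ levels the coefficient on the slab $\tilde q_{k_0+m} - \tilde q_{k_0+m-1}$ is $2^{m+1}$, not $O(1)$. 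Whether this exponential cost is absorbed by the exponential decay of $\tau(\tilde q_k^\perp)$ is precisely what has to be checked, and the check depends on the relative sizes of $p$, $p_0$, $p_1$ and on how much trace is lost per step in the nesting construction (which you do not specify). Your sentence ``this geometric cost, summed against the Chebyshev tail, converges with a universal constant'' asserts the conclusion rather than proving it, and the assertion that the $p_1 \geq 2p_0$ hypothesis is exactly what makes this balance close is a guess, not a derivation. As written, the proposal does not establish that $a$ dominates every $S_t(x)$ with a finite constant of the stated form; the heart of the theorem is still missing.
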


	We conclude the section with the notion of almost uniform convergence introduced by Lance \cite{L1976}.
	
	\begin{definition}
		Let $x_k, x \in L_0(\mathcal{M})$.
		\begin{enumerate}[\rm (i)]
			\item $\left(x_k\right)$ is said to converge almost uniformly (a.u. in short) to $x$ if for any $\delta>0$, there exists a projection $e \in \mathcal{M}$ such that
			$$
			\tau\left(e^{\perp}\right)<\delta \quad \text { and } \quad \lim _{k \rightarrow \infty}\left\|\left(x_k-x\right) e\right\|_{L_\infty(\mathcal{M})}=0 .
			$$
			\item $\left(x_k\right)$ is said to converge bilaterally almost uniformly (b.a.u. in short) to $x$ if for any $\delta>0$, there exists a projection $e \in \mathcal{M}$ such that
			$$
			\tau\left(e^{\perp}\right)<\delta \quad \text { and } \quad \lim _{k \rightarrow \infty}\left\|e\left(x_k-x\right) e\right\|_{L_\infty(\mathcal{M})}=0 .
			$$
		\end{enumerate}
	\end{definition}
	Obviously, $x_k \stackrel{\text { a.u }}{\longrightarrow} x$ implies $x_k \stackrel{\text { b.a.u }}{\longrightarrow} x$. Both convergences defined above are equivalent to the usual almost everywhere convergence due to the Egorov theorem in the commutative probability space.

	\section{Proof of the main result}\label{sec3}
	In this section we reduce Theorem \ref{Main} \eqref{main1} to  Proposition \ref{lemma12} and the proof of Proposition \ref{lemma12} will be postponed until the next section. Then, applying Theorem \ref{Main} \eqref{main1}, we complete the proof of \eqref{1} and \eqref{2}.
	
	
	Let us first concentrate on the maximal inequality \eqref{main}; namely Theorem \ref{Main} \eqref{main1}. Assume that $f$ is positive since it can be decomposed into $f_1-f_2+i(f_3-f_4)$ with positive $f_k$ $(k=1,2,3,4)$.
	Taking a function
	$
	\varphi \in C_0^{\infty}(\mathbb{R})
	$
	such that
	$
	\operatorname{supp} \varphi \subset [1 / 2,2]
	$
	to form a partition of unity
	$$
	\sum_{j\in \mathbb{Z}} \varphi\left(2^{-j} s\right)=1, \quad s>0,
	$$
	we then denote
	$$
	\begin{gathered}
		\varphi_j(\xi)=\varphi\left(2^{-j}|\xi|\right), \quad
		\varphi_0(\xi)=1-\sum_{j=1}^{\infty} \varphi_j(\xi)
	\end{gathered}
	$$
	for $\xi \in \mathbb{R}^n$ and $j\in \mathbb{Z}_+$.
	Define the operator $\mathscr{M}_{j, t}^\alpha$ by
	\begin{equation}\label{malphajt}
		\mathscr{M}_{j, t}^\alpha f(x)
		=
		(\varphi_j(t\cdot)\widehat{m}_\alpha(t \cdot) \widehat{f})^{\vee}(x).
	\end{equation}
	It is obvious that
	\begin{equation}\label{malpha}
		\mathscr{M}_t^\alpha f(x)=\sum_{j=0}^{\infty} \mathscr{M}_{j, t}^\alpha f(x).
	\end{equation}
	Then the conclusion (\ref{main}) can be immediately obtained from (\ref{malpha}) and the following result.

	\begin{theorem}\label{mainprop}
		For $\alpha$ and $p$ satisfying (\ref{alpha}), there exists some $\mu<0$ such that
		\begin{equation}\label{2.16}
			\left\|\sup _{t>0}{}^{+} \mathscr{M}_{j, t}^\alpha f\right\|_{L_p(\mathcal{N})}
			\lesssim
			2^{ \mu j}\|f\|_{L_p(\mathcal{N})},\quad j\geq 0.
		\end{equation}
	\end{theorem}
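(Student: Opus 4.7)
The plan is to combine Assumption~\ref{assum} with a dyadic decomposition in $t$, a scaling argument, and a noncommutative Sobolev embedding in the $t$-variable. For $j=0$ the multiplier $\varphi_0(t\cdot)\widehat{m_\alpha}(t\cdot)$ is smooth and compactly supported (since $\widehat{m_\alpha}$ is smooth near the origin), so its inverse Fourier transform is Schwartz; Lemma~\ref{lem_psi} then immediately yields (\ref{2.16}) with any $\mu\leq 0$ for this piece.

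For $j\geq 1$, the large-argument Bessel asymptotic of $\widehat{m_\alpha}$ lets me decompose $\varphi_j(t\xi)\widehat{m_\alpha}(t\xi)$ as $2^{-j(\frac{n-1}{2}+\alpha)}$ times a sum of two oscillatory pieces $e^{\pm 2\pi it|\xi|}b_{\pm,j}(t\xi)$ with smooth bounded amplitudes supported in $|t\xi|\sim 2^j$, modulo a smoother remainder. Each main piece then has exactly the form of the operator $F_j$ of Assumption~\ref{assum} (up to a cutoff $\rho_1$, which can be absorbed using translation invariance). Applying Lemma~\ref{norm} together with the scaling relation $\mathscr{M}_{j,t}^\alpha f(x)=\mathscr{M}_{j,2^k t}^\alpha\bigl(f(2^{-k}\cdot)\bigr)(2^k x)$, the sup over $t>0$ reduces to a sum over $k\in\mathbb{Z}$ of sups over $t\in[1,2]$, each applied to a rescaled function whose Fourier transform is supported in $|\xi|\sim 2^j$ (since only the frequency-annulus piece of $\widehat f$ at $|\xi|\sim 2^{j+k}$ enters when $t\in I_k$).

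On the rescaled interval I would apply the operator-valued Sobolev embedding
\begin{equation*}
\left\|\sup_{t\in[1,2]}{}^{+} G_t\right\|_{L_p(\mathcal{N})}^{p}\lesssim \|G\|_{L_p(\mathcal{A})}^{p}+\|G\|_{L_p(\mathcal{A})}^{p-1}\|\partial_t G\|_{L_p(\mathcal{A})},
\end{equation*}
which is the noncommutative counterpart of the scalar identity coming from the fundamental theorem of calculus and H\"older. Assumption~\ref{assum} then controls both $\|G\|_{L_p(\mathcal{A})}$ and $\|\partial_t G\|_{L_p(\mathcal{A})}$ for $G_t=\mathscr{M}_{j,t}^\alpha f$, with $\partial_t$ costing one extra factor $2^j$ coming from $\partial_t e^{\pm it|\xi|}$ on $|\xi|\sim 2^j$. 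The resulting $k$-sum collapses to $\|f\|_{L_p(\mathcal{N})}$ via the noncommutative Littlewood-Paley square-function inequality (valid for $p\geq 2$), producing a bound of the shape $2^{\mu(p)j}\|f\|_{L_p(\mathcal{N})}$.

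To handle the full range $2\leq p\leq \infty$, the previous argument (which is tight near $p=\bar p_n$, where Assumption~\ref{assum} is strongest) would be interpolated with the $p=2$ endpoint, which I would prove directly using Plancherel together with a $g$-function square-function trick in $t$, through the Junge--Xu Marcinkiewicz theorem (Lemma~\ref{interpolation}); the two resulting endpoint exponents interpolate to exactly (\ref{alpha}), which is what is needed to ensure $\mu<0$. I expect the chief obstacle to be the noncommutative Sobolev step above: absolute values and suprema of operator-valued $t$-indexed families do not obey scalar triangle inequalities, so producing a single positive operator dominating all $G_t$ simultaneously and then pairing this domination with the noncommutative Littlewood-Paley bound when summing in $k$ must be done through the column/row structure of $L_\infty(\mathbb{R}^n\times\mathbb{R})\overline{\otimes}\mathcal{M}$.
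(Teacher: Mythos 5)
Your overall strategy matches the paper's proof: split $j=0$ from $j>0$, treat $j=0$ with the smoothness of $\widehat m_\alpha\varphi_0$ and Lemma~\ref{lem_psi}, use Lemma~\ref{norm} plus the scaling $f\mapsto f(2^{-k}\cdot)$ to reduce to $t\in[1,2]$, collapse the $k$-sum with noncommutative Littlewood--Paley, invoke the Bessel asymptotics to put the pieces in the form of the $F_j$ of Assumption~\ref{assum}, and feed the endpoint bounds into the Junge--Xu interpolation theorem. That said, two steps as you have stated them would not go through.

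First, the ``operator-valued Sobolev embedding''
\[
\Bigl\|\sup_{t\in[1,2]}{}^{+}G_t\Bigr\|_{L_p(\mathcal N)}^{p}
\lesssim
\|G\|_{L_p(\mathcal A)}^{p}+\|G\|_{L_p(\mathcal A)}^{p-1}\|\partial_t G\|_{L_p(\mathcal A)}
\]
does not hold, or at least is not available, for general $p$. The noncommutative substitute the paper uses (Lemma~4.1 / Corollary~4.2) is proved only for $p=2m$ an even positive integer, because it relies on rewriting $|\mathscr M_{j,t}^\alpha f|^{2m}-|\mathscr M_{j,1}^\alpha f|^{2m}$ as $\int_1^t\partial_s|\mathscr M_{j,s}^\alpha f|^{2m}\,ds$ and then expanding the $2m$-fold product $(\mathscr M^\alpha_{j,s}f)^*\cdots\mathscr M^\alpha_{j,s}f$ term by term; there is no analogue for fractional $p$. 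You flag this as ``the chief obstacle'' but do not resolve it. This is precisely why the paper proves the restricted bound only at the interpolation nodes $p=2$ and $p=4$ (and $p=\infty$), rather than directly at arbitrary $p\geq\bar p_n$, and then interpolates to fill in the rest of the range.

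Second, interpolating only between $p=2$ (Plancherel / $g$-function) and $p=\bar p_n$ (Assumption~\ref{assum}) cannot give the claimed range $2\le p\le\infty$; it leaves $(\bar p_n,\infty]$ uncovered. The paper handles this by proving a separate $p=\infty$ endpoint (Proposition~4.5) via the identification $L_\infty(\mathcal M;\ell_\infty)=\ell_\infty(L_\infty(\mathcal M))$, vector-valued Young's inequality, and an $L_1$ kernel bound $\|\chi(t)G^\alpha_{j,t}\|_{L_1(\mathbb R^n)}\lesssim 1$ that is established independently of Assumption~\ref{assum}. Without this third node the interpolation scheme is incomplete. (The paper's closing Remark makes exactly this point: the sharp scheme interpolates at $p=2$, $\bar p_n$, and $\infty$.)

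Minor: your $p=2$ endpoint via a square-function / $g$-function argument is a legitimate alternative to the paper's route (Sobolev embedding with $m=1$ plus Plancherel); both are standard and equivalent in strength.
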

	

	\begin{proof}We divide the proof into the cases $j=0$ and $j> 0$.
		Let us first treat the special case $j=0$.
		In view of (\ref{malphajt}) we write $\mathscr{M}_{0, t}^\alpha f(x)$ as $\Phi_{0,t} * f(x)$, 
		where
		$$
		\Phi_{0,t}(x)
		=
		\int_{\mathbb{R}^n}\widehat{m}_\alpha(t \xi) \varphi_0(t\xi) e^{2 \pi i x \cdot \xi}d\xi.
		$$
		Given $\xi=(\xi_1,\cdots,\xi_n)\in \mathbb{R}^n$, denote by $\Delta_{\xi}$ the Laplacian operator in $\xi$. 
		For $t>0$ 
		and arbitrary $N>0$, 
		via integration by parts, we obtain
		\begin{equation}\label{F1}
			\begin{aligned}
				\left|\Phi_{0,t}(x)\right|
				& =
				\frac{1}{t^n}\left|\int_{\mathbb{R}^n}\widehat{m}_\alpha(\xi) \varphi_0\left(\xi\right)\left(1+\left|\frac{x}{t}\right|^2\right)^{-N}\left(1-\frac{\Delta_{\xi}}{4 \pi^2}\right)^N e^{2 \pi i \xi \cdot \frac{x}{t}}d\xi\right| \\
				& =
				\frac{1}{t^n}\left(1+\left|\frac{x}{t}\right|^2\right)^{-N}
				\left|\int_{\mathbb{R}^n}e^{2 \pi i \xi \cdot \frac{x}{t}}\left(1-\frac{\Delta_{\xi}}{4 \pi^2}\right)^N\left[\widehat{m}_\alpha(\xi) \varphi_0\left(\xi\right)\right]d\xi\right| \\
				&\lesssim
				\frac{1}{t^{n}}\left(1+\left|\frac{x}{t}\right|\right)^{-N},
			\end{aligned}
		\end{equation}
		where the last inequality is due to the fact that $\widehat{m}_\alpha(\xi ) \varphi_0(\xi)$ is smooth and supported in $|\xi| \leq 2$.
		Hence,
		$$
		\begin{aligned}
			\mathscr{M}_{0,t}^\alpha f(x)
			=
			\Phi_{0,t}*f(x)
			\lesssim
			\frac{1}{t^n}\left(1+\left|\frac{\cdot}{t}\right|\right)^{-N}*f(x).
		\end{aligned}
		$$
		Then, by Lemma \ref{lem_psi}, 
		we conclude that
		$$
		\begin{aligned}
			\left\|\sup_{t>0}{}^+\mathscr{M}_{0,t}^\alpha f\right\|_{L_p(\mathcal{N})}
			\lesssim&
			\left\|\sup _{t>0}{}^+ \frac{1}{t^n}\left(1+\left|\frac{\cdot}{t}\right|\right)^{-N}*f\right\|_{L_p(\mathcal{N})} \\
			\lesssim&
			\left\|\sup _{t>0}{}^+ M_tf\right\|_{L_p(\mathcal{N})}
			\lesssim
			\left\|f\right\|_{L_p(\mathcal{N})}.
		\end{aligned}
		$$
		
		Next we consider the case $j>0$. It is sufficient to prove
		\begin{equation}\label{2.17}
			\left\|\sup_{2^{-k}\leq t\leq 2^{-k+1}}{}^+ \mathscr{M}_{j, t}^\alpha f\right\|_{L_p(\mathcal{N})}
			\lesssim
			2^{\mu j}\|f\|_{L_p(\mathcal{N})}, \quad \forall k \in \mathbb{Z}.
		\end{equation}
		Indeed, it follows from Lemma \ref{norm} that
		$$
		\left\|\sup_{t>0}{}^+ \mathscr{M}_{j, t}^\alpha f\right\|_{L_p(\mathcal{N})}
		\leq
		\left(\sum_{k\in\mathbb{Z}}\left\|\sup_{2^{-k}\leq t\leq 2^{-k+1}}{}^+ \mathscr{M}_{j, t}^\alpha f\right\|_{L_p(\mathcal{N})}^p\right)^{1/p}.
		$$
		Denote by $\widehat{\dot{\Delta}_{\ell} f}(\xi)=\varphi\left(2^{-\ell}|\xi|\right) \widehat{f}(\xi)$ for all $\ell \in \mathbb{Z}$ and notice that
		$$
		\mathscr{M}_{j, t}^\alpha f(x)=\sum_{|\ell| \leq 100} \mathscr{M}_{j, t}^\alpha\left(\dot{\Delta}_{j+k+\ell} f\right)(x),
		$$
		whenever $2^{-k} \leq t \leq 2^{-k+1}$ for $k \in \mathbb{Z}$. We have
		$$
		\left\|\sup_{t>0}{}^+ \mathscr{M}_{j, t}^\alpha f\right\|_{L_p(\mathcal{N})}
		\leq
		\left(\sum_{k\in\mathbb{Z}}\left\|\sup_{2^{-k}\leq t\leq 2^{-k+1}}{}^+ \mathscr{M}_{j, t}^\alpha \left(\sum_{|\ell| \leq 100} \dot{\Delta}_{j+k+\ell} f\right)\right\|_{L_p(\mathcal{N})}^p\right)^{1/p}.
		$$
		Using \eqref{2.17} and the noncommutative Littlewood-Paley inequality \cite[Theorem B2]{MP2009}, we obtain
		$$
		\begin{aligned}
			\left\|\sup_{t>0}{}^+ \mathscr{M}_{j, t}^\alpha f\right\|_{L_p(\mathcal{N})}
			\lesssim&
			2^{\mu j}\left(\sum_{k\in\mathbb{Z}}\left\|\sum_{|\ell| \leq 100} \dot{\Delta}_{j+k+\ell} f\right\|_{L_p(\mathcal{N})}^p\right)^{1/p}\\
			\lesssim&
			2^{\mu j}\left\|\left(\sum_{k \in \mathbb{Z}}\left|\dot{\Delta}_k f(x)\right|^2\right)^{\frac{1}{2}}\right\|_{L_p(\mathcal{N})}
			\lesssim
			2^{\mu j}\|f\|_{L_p(\mathcal{N})},
		\end{aligned}
		$$
		which is the desired estimate.
		
		Consequently, it remains to verify (\ref{2.17}). It is straightforward that
		$$
		\left\|\sup _{2^{-k}\leq t\leq 2^{-k+1}}{}^+\mathscr{M}_{j, t}^\alpha f\right\|_{L_p(\mathcal{N})}
		=
		\left\|\sup _{1\leq t\leq 2}{}^+\mathscr{M}_{j, t}^\alpha\left(f_{k}\right)\left(2^k \cdot\right)\right\|_{L_p(\mathcal{N})},
		$$
		where $f_{k}(x)=f\left(2^{-k} x\right)$. Hence (\ref{2.17}) is a consequence of the following proposition.
	\end{proof}
	
	\begin{proposition}\label{lemma12}
		With all notation above, there exists some $\mu<0$ such that for $2\leq p\leq \infty$,
		$$
		\left\|\sup_{1\leq t\leq2}{}^+ \mathscr{M}_{j,t}^\alpha f\right\|_{L_{p}(\mathcal{N})}
		\lesssim
		2^{\mu j}\|f\|_{L_{p}(\mathcal{N})}, \quad j> 0.
		$$
	\end{proposition}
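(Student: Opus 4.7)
My plan is to expose the Fourier-integral-operator (FIO) structure of $\mathscr{M}^\alpha_{j,t}$, prove two endpoint estimates (at $p=2$ and $p=\bar{p}_n$) that realize the boundary of (\ref{alpha}) at those $p$, and then fill in the range $2\le p\le\infty$ by Stein's analytic interpolation in $\alpha$. To expose the FIO structure, I would use the classical large-argument asymptotic of $\mathcal{J}_{n/2+\alpha-1}$ to write, for $|\xi|\gtrsim 1$,
\[
\widehat{m_\alpha}(\xi)=|\xi|^{-\frac{n-1}{2}-\alpha}\bigl(b_+(|\xi|)e^{2\pi i|\xi|}+b_-(|\xi|)e^{-2\pi i|\xi|}\bigr)+R_\alpha(\xi),
\]
with $b_\pm$ smooth symbols of order zero and $R_\alpha$ rapidly decreasing. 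Since $|t\xi|\sim 2^j$ on the support of $\varphi_j(t\cdot)$, this produces $\mathscr{M}^\alpha_{j,t}f=2^{-j(\frac{n-1}{2}+\alpha)}\sum_{\pm}T^{\alpha,\pm}_{j,t}f$ up to an easy error, where each $T^{\alpha,\pm}_{j,t}$ is an oscillatory integral with phase $x\cdot\xi\pm t|\xi|$ and zero-order amplitude. A Fourier series expansion in $t\in[1,2]$ absorbs the $t$-dependence of the amplitude and brings each $T^{\alpha,\pm}_{j,t}$ into the model form $F_j$ covered by Assumption~\ref{assum}, at the cost of only polynomial growth in $|\operatorname{Im}\alpha|$.

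\textbf{Endpoint estimates.}
At $p=2$, Plancherel (\ref{plan}) together with the symbol bound $|\widehat{m_\alpha}(t\xi)|\lesssim 2^{-j(\frac{n-1}{2}+\operatorname{Re}\alpha)}$ gives $\|\mathscr{M}^\alpha_{j,t}f\|_{L_2(\mathcal{N})}\lesssim 2^{-j(\frac{n-1}{2}+\operatorname{Re}\alpha)}\|f\|_{L_2(\mathcal{N})}$, with the analogous bound for $\partial_t\mathscr{M}^\alpha_{j,t}f$ gaining just one extra $2^j$; an $L_2(\mathcal{M})$-valued one-dimensional Sobolev inequality $\|g\|_{L_\infty([1,2])}^2\lesssim\|g\|_{L_2}^2+\|g\|_{L_2}\|g'\|_{L_2}$ then upgrades this to
\[
\Bigl\|\sup_{1\le t\le 2}{}^{+}\mathscr{M}^\alpha_{j,t}f\Bigr\|_{L_2(\mathcal{N})}\lesssim 2^{-j(\frac{n-2}{2}+\operatorname{Re}\alpha)}\|f\|_{L_2(\mathcal{N})}.
\]
At $p=\bar{p}_n$, Assumption~\ref{assum} yields $\|T^{\alpha,\pm}_{j,t}f\|_{L_{\bar{p}_n}(\mathcal{A})}\lesssim 2^{uj}\|f\|_{L_{\bar{p}_n}(\mathcal{N})}$ with $u$ arbitrarily close to $s_{\bar{p}_n}-1/\bar{p}_n$; converting this mixed-norm bound into the noncommutative maximal norm via a fractional Sobolev embedding in $t$ loses at most $2^{j(1/\bar{p}_n+\varepsilon)}$, and multiplying by the prefactor $2^{-j(\frac{n-1}{2}+\alpha)}$ gives an exponent that is strictly negative precisely on the target range (\ref{alpha}) at $p=\bar{p}_n$.

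\textbf{Interpolation.}
Since $\alpha\mapsto\mathscr{M}^\alpha_{j,t}$ is analytic and the factor $\Gamma(\alpha)^{-1}$ in $m_\alpha$ provides admissible growth along vertical lines, Stein's complex interpolation between the two endpoint estimates delivers a decay $2^{\mu j}$ with $\mu<0$ for all $p\in[2,\bar{p}_n]$ and all $\alpha$ satisfying (\ref{alpha}). For $p\in[\bar{p}_n,\infty]$, one repeats the scheme, this time interpolating the $L_{\bar{p}_n}$-bound against a straightforward $L_\infty$-bound valid for $\operatorname{Re}\alpha$ sufficiently large; together the two interpolation regimes cover the entire range (\ref{alpha}).

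\textbf{Main obstacle.}
The most delicate technical step is the noncommutative fractional Sobolev embedding in $t$ used to pass from $L_p(\mathcal{A})$-control to Pisier-type maximal $L_p(\mathcal{N};\ell_\infty)$-control. Producing the factorization $G(\cdot,t)=a\,y(\cdot,t)\,b$ witnessing $\|\sup_t{}^{+}G\|_{L_p(\mathcal{N})}$ out of the fundamental-theorem-of-calculus identity $G(t)=G(1)+\int_1^t\partial_sG(s)\,ds$ requires handing a fractional $t$-derivative off to the outer $L_{2p}(\mathcal{M})$ factors; this is best isolated as a separate preliminary lemma, in the spirit of \cite[Theorem~4.3]{CXY2013} and \cite[Lemma~5.1]{HLX2022}.
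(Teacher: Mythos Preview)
Your overall architecture --- expose the FIO structure via Bessel asymptotics, prove endpoint maximal bounds, then interpolate --- matches the paper's, but the implementation diverges in two linked ways, and one of them is a real gap.

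\textbf{How the paper actually argues.} The paper takes the three endpoints $p=2,4,\infty$ (not $p=\bar p_n$) and uses the Junge--Xu real Marcinkiewicz interpolation (Lemma~\ref{interpolation}) for \emph{fixed} $\alpha$, not Stein's analytic interpolation in $\alpha$. The passage from the space--time norm $\|\mathscr{M}^\alpha_{j,t}f\|_{L_p(\mathcal B)}$ to the noncommutative maximal norm $\|\sup_t{}^+\mathscr{M}^\alpha_{j,t}f\|_{L_p(\mathcal N)}$ is carried out \emph{only} at the even integers $p=2m$ (here $m=1,2$), via the elementary operator identity
\[
|\mathscr{M}^\alpha_{j,t}f|^{2m}=|\mathscr{M}^\alpha_{j,1}f|^{2m}+\int_1^t\partial_s|\mathscr{M}^\alpha_{j,s}f|^{2m}\,ds
\]
and H\"older's inequality (Corollary~\ref{NL1}). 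This algebraic trick replaces your ``fractional Sobolev embedding in $t$'' entirely; no factorization of the Pisier type needs to be constructed by hand. The $L_\infty$ endpoint is handled by a direct kernel $L_1$-bound (Proposition~\ref{interinfty}), valid for all $\alpha$, not just large $\operatorname{Re}\alpha$.

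\textbf{Where your proposal has a gap.} Your scheme hinges on the step ``converting this mixed-norm bound into the noncommutative maximal norm via a fractional Sobolev embedding in $t$'' at $p=\bar p_n$. You rightly flag this as the main obstacle, but you do not supply it, and the references you cite (\cite[Theorem~4.3]{CXY2013}, \cite[Lemma~5.1]{HLX2022}) concern positive averaging operators dominated by the Hardy--Littlewood maximal function, not a fractional-in-$t$ Sobolev embedding for $L_p(\mathcal N;\ell_\infty)$. For $p\notin 2\mathbb Z$ no pointwise operator inequality like the $|\cdot|^{2m}$ identity is available, and producing the factorization $a\,y_t\,b$ from the fundamental-theorem-of-calculus representation is genuinely nontrivial; the paper sidesteps this completely by choosing $p=4$. (For $n=2,3$ one has $\bar p_n=4$, so your endpoint coincides with the paper's and the issue disappears; for $n\ge4$ it does not.) A second, smaller issue: Stein's complex interpolation in the vector-valued spaces $L_p(\mathcal N;\ell_\infty)$ requires the complex interpolation identity for these spaces and admissible growth of the family in $\operatorname{Im}\alpha$; this is doable but adds work that the paper's real interpolation avoids. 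In short, if you replace $\bar p_n$ by $4$ and Stein interpolation by Lemma~\ref{interpolation}, your plan becomes the paper's proof; as written, the fractional embedding step is missing.
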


	The proof of  Proposition \ref{lemma12} will be given in Section \ref{sec4}.
	Now we continue dealing with the pointwise convergence stated in Theorem \ref{Main} (\ref{1}) and (\ref{2}). Before that we give a useful lemma.
	\begin{lemma}\label{lammtf2}
		With all notation above, one has for $\alpha\geq 0$ that
		$$
		\left|\mathscr{M}_t^{\alpha}f(x)\right|^2
		\lesssim
		\mathscr{M}_t^{\alpha}(|f|^2)(x).
		$$
	\end{lemma}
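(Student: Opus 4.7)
The plan is to interpret $\mathscr{M}_t^\alpha$ as convolution with a non-negative finite measure and then invoke the operator-valued Cauchy--Schwarz inequality, which reduces the whole lemma to a one-line computation.

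First I would record positivity of the kernel. For $\alpha>0$ the function $m_\alpha(y)=\Gamma(\alpha)^{-1}(1-|y|^2)_+^{\alpha-1}$ is non-negative, hence $m_{\alpha,t}(y)\geq 0$, and its total mass is
$$
C_\alpha := \int_{\mathbb{R}^n} m_{\alpha,t}(y)\,dy = \widehat{m_\alpha}(0) < \infty,
$$
the finiteness being one of the standard facts recalled just after (\ref{1.2}). For $\alpha=0$ the formula (\ref{1.2}) identifies $m_{0,t}$, via analytic continuation, with a positive constant multiple of the normalized surface measure on the sphere of radius $t$, which is again a non-negative finite measure. So in both cases $m_{\alpha,t}$ is a non-negative finite scalar measure.

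Next I would invoke the operator-valued Cauchy--Schwarz inequality: for any non-negative finite measure $d\mu$ on $\mathbb{R}^n$ of total mass $C$ and any strongly measurable operator-valued function $g$, one has the pointwise operator inequality
$$
\left|\int g(y)\,d\mu(y)\right|^2 \leq C\int |g(y)|^2\,d\mu(y)
$$
in $\mathcal{M}$. This follows by applying the scalar Cauchy--Schwarz inequality to $\langle \xi,(\int g\,d\mu)^{\ast}(\int g\,d\mu)\xi\rangle$ for arbitrary vectors $\xi$ in the underlying Hilbert space, or equivalently from Kadison--Schwarz for the $2$-positive map $g\mapsto C^{-1}\!\int g\,d\mu$. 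Plugging $g(y)=f(x-y)$ and $d\mu(y)=m_{\alpha,t}(y)\,dy$ yields
$$
|\mathscr{M}_t^\alpha f(x)|^2 \leq C_\alpha \int |f(x-y)|^2\,m_{\alpha,t}(y)\,dy = C_\alpha \,\mathscr{M}_t^\alpha(|f|^2)(x),
$$
which is exactly the claimed estimate with implicit constant depending only on $\alpha$ and $n$.

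The only delicate point is the borderline $\alpha=0$, where $m_\alpha$ is no longer a locally integrable function and must be interpreted distributionally. I would dispose of it either by a limiting argument letting $\alpha\to 0^+$ through positive real values (using continuity of both sides in $\alpha$ for sufficiently smooth $f$) or, more directly, by rewriting $\mathscr{M}_t^0 f(x)$ as a constant multiple of $\int_{S^{n-1}} f(x-t\omega)\,d\sigma(\omega)$ and rerunning the same Cauchy--Schwarz argument with the spherical measure in place of $m_{\alpha,t}(y)\,dy$. Beyond this small bookkeeping I do not foresee any genuine obstacle: the content of the lemma is simply Cauchy--Schwarz for a positive operator-valued integral against a finite non-negative measure.
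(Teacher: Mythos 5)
Your proposal is correct and follows essentially the same route as the paper: both write $|m_{\alpha,t}*f|^2$ using the splitting $m_{\alpha,t}=m_{\alpha,t}^{1/2}\cdot m_{\alpha,t}^{1/2}$, apply the operator-valued Cauchy--Schwarz/H\"{o}lder inequality, and reduce to checking that $\int m_{\alpha,t}(y)\,dy$ is finite (which the paper computes directly, for $\alpha>0$ via the radial integral $\int_0^1(1-r^2)^{\alpha-1}r^{n-1}\,dr$ and for $\alpha=0$ via the distributional limit $\Gamma(\alpha)^{-1}|\cdot|^{\alpha-1}\to\delta$, while you observe the mass equals $\widehat{m_\alpha}(0)<\infty$ and identify $m_{0,t}$ with normalized spherical measure -- the same content, differently phrased).
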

	\begin{proof}
		By H\"{o}lder's inequality (see e.g. \cite[(1.13)]{M2007}), we have
		\begin{align*}
			\left|\mathscr{M}_t^{\alpha}f(x)\right|^2=|m_{\alpha,t}*f(x)|^2&=\left|\int_{\mathbb{R}^n} m_{\alpha,t}^{\frac12}(y)(m_{\alpha,t}^{\frac12}(y)f(x-y))dy\right|^2\\
			&\leq \int_{\mathbb{R}^n} m_{\alpha, t}(y) dy\int_{\mathbb{R}^n} m_{\alpha,t}(y)|f|^2(x-y))dy\\
			&=\int_{\mathbb{R}^n} m_{\alpha, t}(y) dy\cdot\mathscr{M}_t^{\alpha}(|f|^2)(x).
		\end{align*}
		Therefore, it suffices to show that $\int_{\mathbb{R}^n} m_{\alpha, t}(y) dy<\infty$. For $\alpha>0$, we have
		$$
		\begin{aligned}
			\int_{\mathbb{R}^n} m_{\alpha, t}(y) dy
			&=
			\frac{1}{\Gamma(\alpha)} \int_{|y|\leq1}\left(1-|y|^2\right)^{\alpha-1} d y \\
			& =
			\frac{1}{\Gamma(\alpha)} \int_{\mathbb{S}^{n-1}} \int_0^1\left(1-r^2\right)^{\alpha-1} r^{n-1} d r d \sigma(\theta)
			\leq
			\frac{|\mathbb{S}^{n-1}|}{\alpha \Gamma(\alpha)}.
		\end{aligned}
		$$
		On the other hand, for $\alpha=0$,
		$$
		\int_{\mathbb{R}^n} m_{0, t}(y) dy
		=
		\lim _{\alpha \rightarrow 0^{+}} \frac{1}{\Gamma(\alpha)} \int_{|y|\leq1}\left(1-|y|^2\right)^{\alpha-1} d y
		=\int_{\mathbb{S}^{n-1}} d \sigma(\theta)=|\mathbb{S}^{n-1}|,
		$$
		where we have used the fact
		$$
		\lim _{\alpha \rightarrow 0^{+}} \frac{|\cdot|^{\alpha-1}}{\Gamma(\alpha)}=\delta
		$$
		(here, $\delta$ denotes the Dirac distribution at zero). The proof is complete.
	\end{proof}
	
	We are now ready to prove Theorem \ref{Main} \eqref{1}.
	Suppose that $2\leq p<\infty$. Fix $\delta >0$. By the density of $C_0^{\infty}(\mathbb{R}^n)\otimes S_{\mathcal{M}}$ in $L_{2p}(\mathcal{N})$, for every $l\geq 1$, there exists
	$
	g_l=\sum_{i=1}^l\phi_i\otimes m_i\in C_0^{\infty}(\mathbb{R}^n)\otimes S_{\mathcal{M}}
	$
	such that $\|f-g_l\|_{L_{2p}(\mathcal{N})}^{2p}\leq \frac{\delta}{2^ll^{2p}}$.
	Now applying Lemma \ref{lammtf2} and the strong-type $(p, p)$ estimate established in Theorem \ref{Main} (\ref{main1}) to each $|f-g_l|^2$, 
	there exists a projection $e_{1,l}\in \mathcal{N}$ such that
	$$
	\begin{aligned}
		\sup_{t>0}\|e_{1,l} \mathscr{M}^\alpha_t(f-g_l)\|_{L_\infty(\mathcal{N})}
		=&
		\sup_{t>0}\|e_{1,l} |\mathscr{M}^\alpha_t(f-g_l)|^2e_{1,l}\|_{L_\infty(\mathcal{N})}^{1/2}\\
		\lesssim&
		\sup_{t>0}\|e_{1,l} \mathscr{M}^\alpha_t(|f-g_l|^2)e_{1,l}\|_{L_\infty(\mathcal{N})}^{1/2}
		\leq
		\frac{1}{l},
	\end{aligned}
	$$
	and
	$$
	\int_{\mathbb{R}^n}\otimes \tau(e_{1,l}^{\perp})
	<
	l^{2p}\||f-g_l|^2\|_{L_p(\mathcal{N})}^{p}
	=
	l^{2p}\|f-g_l\|_{L_{2p}(\mathcal{N})}^{2p}
	\leq
	\frac{\delta}{2^l}.
	$$
	Let $e_1=\wedge_le_{1,l}$. Then we have
	$$\int_{\mathbb{R}^n}\otimes \tau(e_1^{\perp})<\delta
	\quad \mbox{and}\quad \sup_{t>0}\|e_1 \mathscr{M}^\alpha_t(f-g_l)\|_{L_\infty(\mathcal{N})}\leq\frac{1}{l},\quad l\geq 1.
	$$
	The same argument ensures that there exists  $e_{2} \in \mathcal{N}$ such that
	$$\int_{\mathbb{R}^n}\otimes \tau(e_2^{\perp})<\delta
	\quad\text{and}\quad
	\|e_2(f-g_l)\|_{L_\infty(\mathcal{N})}\leq\frac{1}{l},\quad l\geq 1.
	$$
	
	Recall that for $\phi_i\in C_0^{\infty}(\mathbb{R}^n)$. We have
	$$
	\lim_{t\to 0^+}\left|\int_{\mathbb{R}^n} \widehat{\phi_i}(\xi)(\widehat{m_{\alpha}}(t\xi)-1) e^{2\pi ix\cdot\xi}d\xi\right|
	=
	0
	$$
	due to the asymptotic behavior of Bessel function in (\ref{1.2}). This together with
	$$
	\begin{aligned}
		\mathscr{M}^\alpha_tg_l(x)-g_l(x)
		=&
		\sum_{i=1}^l \left[\int_{\mathbb{R}^n} \widehat{\phi_i}(\xi)(\widehat{m_{\alpha}}(t\xi)-1) e^{2\pi ix\cdot\xi}d\xi\right]\otimes m_i 
	\end{aligned}
	$$
	implies that
	$
	\lim_{t\to 0^+}\|\mathscr{M}_t^\alpha g_l-g_l\|_{L_\infty(\mathcal{N})}=0.
	$
	Therefore, for any $l\geq 1$,
	there exists a positive constant $N_l$ such that for any $1/t>N_l$
	$$
	\|\mathscr{M}_t^\alpha g_l-g_l\|_{L_\infty(\mathcal{N})}\leq\frac{1}{l}.
	$$
	
	Let $e=e_1\wedge e_2$. Putting the above estimates together, we obtain
	\begin{align*}
		&\|e(\mathscr{M}^\alpha_tf-f)\|_{L_\infty(\mathcal{N})}\\
		&\leq
		\|e(\mathscr{M}^\alpha_tg_l-g_l)\|_{L_\infty(\mathcal{N})}
		+
		\|e\mathscr{M}^\alpha_t(f-g_l)\|_{L_\infty(\mathcal{N})}
		+
		\|e(f-g_l)\|_{L_\infty(\mathcal{N})}\\
		&\leq
		\|\mathscr{M}^\alpha_tg_l-g_l\|_{L_\infty(\mathcal{N})}
		+
		\|e_1\mathscr{M}^\alpha_t(f-g_l)\|_{L_\infty(\mathcal{N})}
		+
		\|e_2(f-g_l)\|_{L_\infty(\mathcal{N})}\leq \frac{3}{l},
	\end{align*}
	which yields
	$
	\lim_{t\to 0^+} \|e(\mathscr{M}^\alpha_tf-f)\|_{L_\infty(\mathcal{N})}=0
	$
	and finishes the proof of Theorem \ref{Main} (\ref{1}).
	
	Based on the above arguments, Theorem \ref{Main} (\ref{2}) can be proved with minor changes. We omit the details.

	\section{Proof of Proposition \ref{lemma12}}\label{sec4}
	
	In this section, we show that Proposition \ref{lemma12} is a natural conclusion of the combination of the noncommutative interpolation theorem (i.e., Lemma \ref{interpolation}) and the strong-type $(p,p)$ estimates of $\{\mathscr{M}_{j,t}^\alpha\}_{1\leq t\leq 2}$ with $p=2,4,\infty$.
	Before doing that, we state and prove a lemma needed in our proof.
	\begin{lemma}
		Let $m$ be a positive integer. Then we have
		\begin{equation}\label{NL}
			\left|\mathscr{M}_{j, t}^\alpha f\right|^{2m}
			-
			\left|\mathscr{M}_{j, 1}^\alpha f\right|^{2m}
			=
			\int_1^t\frac{\partial}{\partial s}|\mathscr{M}_{j, s}^\alpha f|^{2m}ds, \quad 1\leq t\leq2.
		\end{equation}
	\end{lemma}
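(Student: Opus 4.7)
The plan is to prove (\ref{NL}) by direct application of the fundamental theorem of calculus to the operator-valued function $s \mapsto |\mathscr{M}_{j,s}^\alpha f|^{2m}$ on the interval $[1,t]$. To do so, I must verify that this function is continuously differentiable in a topology in which the Bochner/Riemann FTC applies, and then compute its derivative via the operator product rule.

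First I would argue that $s \mapsto \mathscr{M}_{j,s}^\alpha f$ is continuously differentiable. By the definition (\ref{malphajt}),
$$\mathscr{M}_{j,s}^\alpha f(x) = \int_{\mathbb{R}^n} e^{2\pi i x\cdot\xi}\, \varphi_j(s\xi)\, \widehat{m}_\alpha(s\xi)\, \widehat{f}(\xi)\, d\xi,$$
and since $\varphi$ is supported in $[1/2,2]$, the integrand vanishes unless $|s\xi|\in[2^{j-1},2^{j+1}]$. For $s\in[1,2]$ this forces $\xi$ to lie in a fixed compact annulus $K_j$, uniformly in $s$. Because $\widehat{m}_\alpha$ and $\varphi_j$ are smooth on $K_j$, all $s$-derivatives of the symbol $\varphi_j(s\xi)\widehat{m}_\alpha(s\xi)$ are bounded on $K_j\times[1,2]$. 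It is enough to prove (\ref{NL}) for $f$ in the dense subclass $\mathcal{S}(\mathbb{R}^n;L_p(\mathcal{M}))$, so $\widehat{f}$ is a rapidly decreasing $L_p(\mathcal{M})$-valued function, hence integrable on $K_j$. Dominated convergence (applied to the difference quotients) therefore yields
$$\frac{\partial}{\partial s}\mathscr{M}_{j,s}^\alpha f(x) = \int_{\mathbb{R}^n} e^{2\pi i x\cdot\xi}\, \partial_s\bigl[\varphi_j(s\xi)\widehat{m}_\alpha(s\xi)\bigr]\widehat{f}(\xi)\, d\xi,$$
with the derivative continuous in $s$ as a map $[1,2]\to L_p(\mathcal{N})$ (indeed as a bounded $L_\infty(\mathcal{N})$-valued function on compact subsets in $x$).

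Next, writing $A_s := \mathscr{M}_{j,s}^\alpha f$, I would expand $|A_s|^{2m} = (A_s^*A_s)^m$ and apply the operator product rule. Since this is a polynomial of degree $2m$ in the continuously differentiable quantities $A_s$ and $A_s^*$, the composition is continuously differentiable and
$$\frac{\partial}{\partial s}(A_s^*A_s)^m = \sum_{k=0}^{m-1}(A_s^*A_s)^k\bigl((\partial_s A_s)^*A_s + A_s^*(\partial_s A_s)\bigr)(A_s^*A_s)^{m-1-k},$$
where each factor is jointly continuous in $s$ in the norm topology of $L_{p/(2m)}(\mathcal{N})$ (or, more simply, pointwise in $x$ with values in $L_{q}(\mathcal{M})$ for any sufficiently small $q$) by Hölder's inequality in the noncommutative setting. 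Continuity of the derivative then reduces (\ref{NL}) to the standard Bochner fundamental theorem of calculus applied on $[1,t]$.

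The main obstacle is a purely technical one: justifying differentiation under the Fourier integral in the operator-valued setting and verifying continuity of the product-rule expansion in a topology that supports FTC. All of this is harmless because the frequency cutoff $\varphi_j(s\xi)$ confines $\xi$ to a compact annulus uniformly in $s\in[1,2]$, so no oscillatory or decay issues arise; once the identity is established on the Schwartz-dense subspace it extends to general $f$ in the manner needed for the uses of (\ref{NL}) later in the paper.
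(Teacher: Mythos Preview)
Your proposal is correct: once one verifies that $s\mapsto A_s:=\mathscr{M}_{j,s}^\alpha f$ is $C^1$ (which the compact frequency support of $\varphi_j(s\,\cdot)$ makes routine), the polynomial $s\mapsto (A_s^*A_s)^m$ is $C^1$ by the operator product rule, and the Bochner fundamental theorem of calculus on $[1,t]$ gives (\ref{NL}) directly.

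The paper argues a bit differently. Rather than invoking the FTC for the full product $(A_s^*A_s)^m$ in one stroke, it peels off one factor at a time via the algebraic telescoping
\[
(A_t^*A_t)^m-(A_1^*A_1)^m=\bigl(A_t^*-A_1^*\bigr)A_t|A_t|^{2m-2}+A_1^*\bigl(A_t|A_t|^{2m-2}-A_1|A_1|^{2m-2}\bigr),
\]
applies the scalar FTC to the first difference to write $A_t^*-A_1^*=\int_1^t(\partial_s A_s)^*\,ds$, rewrites $A_t|A_t|^{2m-2}$ itself as an integral over $[s,t]$, interchanges the order of integration, and iterates. The end result is exactly your product-rule expansion followed by integration. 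Your route is shorter and more conceptual; the paper's explicit telescoping has the minor advantage of only invoking FTC for the linear map $s\mapsto A_s$ (rather than for an operator-valued product), so one avoids naming a target Banach space for the Bochner integral of $\partial_s(A_s^*A_s)^m$. Substantively the two arguments are equivalent.
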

	\begin{proof}
		It is easy to verify that
		$$
		\begin{aligned}
			& \left|\mathscr{M}_{j, t}^\alpha f\right|^{2m}
			-
			\left|\mathscr{M}_{j, 1}^\alpha f\right|^{2m}\\
			&\quad=
			[(\mathscr{M}_{j, t}^\alpha f)^*\mathscr{M}_{j, t}^\alpha f]^m-[(\mathscr{M}_{j, 1}^\alpha f)^*\mathscr{M}_{j, 1}^\alpha f]^m\\
			&\quad=
			[(\mathscr{M}_{j, t}^\alpha f)^*-(\mathscr{M}_{j, 1}^\alpha f)^*]\mathscr{M}_{j, t}^\alpha f|\mathscr{M}_{j, t}^\alpha f|^{2m-2}\\
			&\quad\quad+
			(\mathscr{M}_{j, 1}^\alpha f)^*\left(\mathscr{M}_{j, t}^\alpha f|\mathscr{M}_{j, t}^\alpha f|^{2m-2}-\mathscr{M}_{j, 1}^\alpha f|\mathscr{M}_{j, 1}^\alpha f|^{2m-2}\right)\\
			&\quad=
			\int_1^t\left[\frac{\partial}{\partial s}(\mathscr{M}_{j, s}^\alpha f)^*\right]\mathscr{M}_{j, t}^\alpha f|\mathscr{M}_{j, t}^\alpha f|^{2m-2}ds\\
			&\quad\quad+
			(\mathscr{M}_{j, 1}^\alpha f)^*\left(\mathscr{M}_{j, t}^\alpha f|\mathscr{M}_{j, t}^\alpha f|^{2m-2}-\mathscr{M}_{j, 1}^\alpha f|\mathscr{M}_{j, 1}^\alpha f|^{2m-2}\right).
		\end{aligned}
		$$
		We now rewrite
		$
		\mathscr{M}_{j, t}^\alpha f|\mathscr{M}_{j, t}^\alpha f|^{2m-2}
		$
		as
		$$
		\int_{s}^t\frac{\partial}{\partial s'}\left(\mathscr{M}_{j, s'}^\alpha f|\mathscr{M}_{j, s'}^\alpha f|^{2m-2}\right)ds'
		+
		\mathscr{M}_{j, s}^\alpha f|\mathscr{M}_{j, s}^\alpha f|^{2m-2},
		$$
		and interchange the order of double integral. This leads to the identity
		$$
		\begin{aligned}
			&\int_1^t\left[\frac{\partial}{\partial s}(\mathscr{M}_{j, s}^\alpha f)^*\right]\mathscr{M}_{j, t}^\alpha f|\mathscr{M}_{j, t}^\alpha f|^{2m-2}ds\\
			&\quad=
			\int_1^t\int_{1}^{s'}\left[\frac{\partial}{\partial s}(\mathscr{M}_{j, s}^\alpha f)^*\right]ds\frac{\partial}{\partial s'}\mathscr{M}_{j, s'}^\alpha f|\mathscr{M}_{j, s'}^\alpha f|^{2m-2}ds'\\
			&\quad \quad+
			\int_1^t\left[\frac{\partial}{\partial s}(\mathscr{M}_{j, s}^\alpha f)^*\right]\mathscr{M}_{j, s}^\alpha f|\mathscr{M}_{j, s}^\alpha f|^{2m-2}ds\\
			&\quad=
			\int_1^t(\mathscr{M}_{j, s'}^\alpha f)^*\frac{\partial}{\partial s'}\left[\mathscr{M}_{j, s'}^\alpha f|\mathscr{M}_{j, s'}^\alpha f|^{2m-2}\right]ds'\\
			&\quad \quad-
			\int_1^t(\mathscr{M}_{j, 1}^\alpha f)^*\frac{\partial}{\partial s'}\left[\mathscr{M}_{j, s'}^\alpha f|\mathscr{M}_{j, s'}^\alpha f|^{2m-2}\right]ds'\\
			&\quad \quad+
			\int_1^t\left[\frac{\partial}{\partial s}(\mathscr{M}_{j, s}^\alpha f)^*\right]\mathscr{M}_{j, s}^\alpha f|\mathscr{M}_{j, s}^\alpha f|^{2m-2}ds.
		\end{aligned}
		$$
		It follows that
		$$
		\begin{aligned}
			&\left|\mathscr{M}_{j, t}^\alpha f\right|^{2m}
			-
			\left|\mathscr{M}_{j, 1}^\alpha f\right|^{2m}\\
			&\quad=
			\int_1^t(\mathscr{M}_{j, s'}^\alpha f)^*\frac{\partial}{\partial s'}\left[\mathscr{M}_{j, s'}^\alpha f|\mathscr{M}_{j, s'}^\alpha f|^{2m-2}\right]ds'\\
			&\quad\quad+
			\int_1^t\left[\frac{\partial}{\partial s}(\mathscr{M}_{j, s}^\alpha f)^*\right]\mathscr{M}_{j, s}^\alpha f|\mathscr{M}_{j, s}^\alpha f|^{2m-2}ds\\
			&\quad=
			\int_1^t\frac{\partial}{\partial s}|\mathscr{M}_{j, s}^\alpha f|^{2m}ds,
		\end{aligned}
		$$
		which is the desired estimate.
	\end{proof}
	
	Based on the above lemma, we have the following corollary which corresponds to the celebrated Sobolev embedding theorem.
	\begin{corollary}
		Let $m$ be a positive integer. Then we have
		\begin{equation}\label{NL1}
			\left\|\sup _{1\leq t\leq2}{}^+\mathscr{M}_{j, t}^\alpha f\right\|_{L_{2m}(\mathcal{N})}^{2m}
			\leq
			\left\|\mathscr{M}_{j, 1}^\alpha f\right\|_{L_{2m}(\mathcal{N})}^{2m}
			+
			2m\left\|\mathscr{M}_{j, t}^\alpha f\right\|_{L_{2m}(\mathcal{B})}^{2m-1}
			\left\|\frac{\partial}{\partial t} \mathscr{M}_{j, t}^\alpha f\right\|_{L_{2m}(\mathcal{B})},
		\end{equation}
		where and in what follows $
		\mathcal{B}
		:=L_{\infty}\left(\mathbb{R}^n \times[1,2]\right) \overline{\otimes} \mathcal{M}.
		$
	\end{corollary}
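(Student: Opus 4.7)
The plan is to lift the pointwise-in-$t$ identity \eqref{NL} to a uniform operator-valued upper bound on $|\mathscr{M}_{j,t}^\alpha f|^{2m}$, and then to evaluate the resulting trace norm by the Leibniz rule combined with noncommutative H\"older. First, setting $X_s:=\partial_s|\mathscr{M}_{j,s}^\alpha f|^{2m}$, which is self-adjoint as the derivative of a positive self-adjoint family, one has the pointwise operator inequality $X_s\leq X_{s,+}\leq|X_s|$. Since operator inequalities are preserved by integration, \eqref{NL} yields, for every $t\in[1,2]$,
\[
\left|\mathscr{M}_{j,t}^\alpha f\right|^{2m}\;\leq\;a\;:=\;\left|\mathscr{M}_{j,1}^\alpha f\right|^{2m}+\int_1^2 |X_s|\,ds,
\]
with $a\geq 0$ independent of $t$. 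By the operator monotonicity of $z\mapsto z^{1/m}$ this further yields $\mathscr{M}_{j,t}^\alpha f^{\,*}\mathscr{M}_{j,t}^\alpha f\leq a^{1/m}$ uniformly in $t$.

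Second, I would convert this pointwise-in-$t$ operator majorant into a $\sup^+$-norm bound, namely
\[
\left\|\sup_{1\leq t\leq 2}{}^+\mathscr{M}_{j,t}^\alpha f\right\|_{L_{2m}(\mathcal{N})}^{2m}\;\leq\;\|a\|_{L_1(\mathcal{N})}\;=\;\bigl\|\mathscr{M}_{j,1}^\alpha f\bigr\|_{L_{2m}(\mathcal{N})}^{2m}+\int_1^2 \|X_s\|_{L_1(\mathcal{N})}\,ds.
\]
This step uses the factorization definition of $L_{2m}(\mathcal{N};\ell_\infty)$: from $\mathscr{M}_{j,t}^\alpha f^{\,*}\mathscr{M}_{j,t}^\alpha f\leq a^{1/m}$ Douglas' factorization produces $\mathscr{M}_{j,t}^\alpha f=W_t\,a^{1/(2m)}$ with $\|W_t\|_{L_\infty(\mathcal{N})}\leq 1$ uniformly in $t$, and $a^{1/(2m)}\in L_{2m}(\mathcal{N})$ with $\|a^{1/(2m)}\|_{L_{2m}}^{2m}=\|a\|_{L_1}$, which furnishes an admissible factorization in the sense of Section~\ref{sec2}.

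Third, I would apply the Leibniz rule to $X_s=\partial_s(\mathscr{M}_{j,s}^\alpha f^{\,*}\mathscr{M}_{j,s}^\alpha f)^m$ exactly as in the proof of \eqref{NL}, writing $X_s$ as a sum of $2m$ noncommutative monomials, each of which contains $2m-1$ factors of $\mathscr{M}_{j,s}^\alpha f$ (or its adjoint) and a single factor of $\partial_s\mathscr{M}_{j,s}^\alpha f$ (or its adjoint). Noncommutative H\"older's inequality on $L_1(\mathcal{N})$ with $2m$ equal exponents $2m$ then bounds each monomial by $\|\partial_s\mathscr{M}_{j,s}^\alpha f\|_{L_{2m}(\mathcal{N})}\,\|\mathscr{M}_{j,s}^\alpha f\|_{L_{2m}(\mathcal{N})}^{2m-1}$, so that
\[
\|X_s\|_{L_1(\mathcal{N})}\;\leq\;2m\,\|\partial_s\mathscr{M}_{j,s}^\alpha f\|_{L_{2m}(\mathcal{N})}\,\|\mathscr{M}_{j,s}^\alpha f\|_{L_{2m}(\mathcal{N})}^{2m-1}.
\]
Integrating in $s\in[1,2]$ and applying the scalar H\"older inequality with conjugate exponents $2m$ and $2m/(2m-1)$ packages the right-hand side into $2m\,\|\partial_s\mathscr{M}_{j,s}^\alpha f\|_{L_{2m}(\mathcal{B})}\,\|\mathscr{M}_{j,s}^\alpha f\|_{L_{2m}(\mathcal{B})}^{2m-1}$, which together with the first two steps gives \eqref{NL1}.

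The main obstacle is the second step. While the analogue $\|\sup_t|f_t|\|_p^p=\|\sup_t|f_t|^p\|_1$ is tautological in the commutative world, here $\mathscr{M}_{j,t}^\alpha f$ need not be self-adjoint, so the direct positive/self-adjoint characterization of $L_{2m}(\mathcal{N};\ell_\infty)$ recalled in Section~\ref{sec2} does not apply. One must instead exploit the one-sided Douglas factorization $\mathscr{M}_{j,t}^\alpha f=W_t\,a^{1/(2m)}$ with $\|W_t\|_\infty\leq 1$, and then use the Junge--Xu theory of noncommutative $L_p(\ell_\infty)$ (passing, if needed, through the column norm $\|\cdot\|_{L_{2m}(\mathcal{N};\ell_\infty^c)}$) to recover the symmetric $\sup^+$-norm bound by the operator-valued quantity $\|a\|_{L_1(\mathcal{N})}$.
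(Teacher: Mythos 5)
Your proposal takes essentially the same approach as the paper: both derive a $t$-independent operator majorant of $|\mathscr{M}_{j,t}^\alpha f|^{2m}$ from the identity \eqref{NL}, reduce the maximal $L_{2m}$-norm to the $L_1$-norm of that majorant, and then Leibniz-expand the $s$-derivative and apply noncommutative H\"older followed by scalar H\"older in $s$. The step you flag as the main obstacle is invoked by the paper only as a ``fundamental property of the noncommutative maximal norm'' with no further detail, so your Douglas-factorization discussion of it is a refinement of the same argument rather than a divergence from it.
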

	\begin{proof}
		By the fundamental properties of noncommutative maximal norm and (\ref{NL}), we deduce that
		$$
		\begin{aligned}
			\left\|\sup_{1\leq t\leq 2}{}^+ \mathscr{M}_{j, t}^\alpha f\right\|_{L_{2m}(\mathcal{N})}^{2m}
			\leq&
			\left\|\sup_{1\leq t\leq 2}{}^+ |\mathscr{M}_{j, t}^\alpha f|^{2m}\right\|_{L_{1}(\mathcal{N})}\\
			\leq&
			\left\|\int_{1}^{t}\frac{\partial}{\partial s} |\mathscr{M}_{j, s}^\alpha f|^{2m} ds+|\mathscr{M}_{j, 1}^\alpha f|^{2m}\right\|_{L_{1}(\mathcal{N})}\\
			\leq&
			\left\|\int_{1}^{t}\sum_{i=1}^{2m} \Psi_{j,i}f ds\right\|_{L_{1}(\mathcal{N})}
			+
			\left\|\mathscr{M}_{j, 1}^\alpha f\right\|_{L_{2m}(\mathcal{N})}^{2m},
		\end{aligned}
		$$
		where
		$$
		\Psi_{j,i}f
		=
		\begin{cases}(\mathscr{M}_{j, s}^\alpha f)^*\cdots\overset{i -th}{\overbrace{\left(\frac{\partial}{\partial s}(\mathscr{M}_{j, s}^\alpha f)^*\right)}}\mathscr{M}_{j, s}^\alpha f\cdots \overset{2m -th}{\overbrace{\mathscr{M}_{j, s}^\alpha f}}, & i\text{ is odd},\\
			(\mathscr{M}_{j, s}^\alpha f)^*\cdots(\mathscr{M}_{j, s}^\alpha f)^*\overset{i -th}{\overbrace{\left(\frac{\partial}{\partial s}\mathscr{M}_{j, s}^\alpha f\right)}}\cdots \overset{2m -th}{\overbrace{\mathscr{M}_{j, s}^\alpha f}}, & i\text{ is even}.
		\end{cases}
		$$
		Noncommutative H\"{o}lder's inequality gives that
		$$
		\begin{aligned}
			\left\|\int_{1}^{t}\sum_{i=1}^{2m} \Psi_{j,i}f ds\right\|_{L_{1}(\mathcal{N})}
			\leq&
			\left\|\int_{1}^{2}\sum_{i=1}^{2m} \Psi_{j,i}f ds\right\|_{L_{1}(\mathcal{N})}\\
			\leq&
			2m\left\|\mathscr{M}_{j, t}^\alpha f\right\|_{L_{2m}(\mathcal{B})}^{2m-1}
			\left\|\frac{\partial}{\partial t}\mathscr{M}_{j, t}^\alpha f \right\|_{L_{2m}(\mathcal{B})}.
		\end{aligned}
		$$
		This completes the proof.
	\end{proof}

	Next we show the strong-type $(p,p)$ estimates of $\{\mathscr{M}_{j,t}^\alpha\}_{1\leq t\leq 2}$ with $p=2,4,\infty$ in the following propositions. To simplify the symbols, we denote $\overline{w}:=\frac{n}{2}+\alpha-1$ throughout the rest of this section.
	
	\begin{proposition}\label{inter2}
		With all notation above, we get that for $j>0$,
		$$
		\left\|\sup _{1\leq t\leq 2}{}^+ \mathscr{M}_{j,t}^\alpha f\right\|_{L_2(\mathcal{N})}
		\lesssim
		2^{-\overline{w} j}\|f\|_{L_2(\mathcal{N})}.
		$$
	\end{proposition}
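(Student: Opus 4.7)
I plan to apply the Sobolev-type inequality \eqref{NL1} with $m=1$, which gives
\[
\left\|\sup_{1\leq t\leq 2}{}^+ \mathscr{M}_{j,t}^\alpha f\right\|_{L_2(\mathcal{N})}^2
\leq
\|\mathscr{M}_{j,1}^\alpha f\|_{L_2(\mathcal{N})}^2
+
2\,\|\mathscr{M}_{j,t}^\alpha f\|_{L_2(\mathcal{B})}\,\bigl\|\tfrac{\partial}{\partial t}\mathscr{M}_{j,t}^\alpha f\bigr\|_{L_2(\mathcal{B})}.
\]
Each of the three terms is the $L_2$-norm of a Fourier multiplier applied to $f$, so by Fubini and the vector-valued Plancherel identity \eqref{plan} it suffices to estimate in $L^\infty$, uniformly in $t\in[1,2]$, the symbols
\[
m_j(t,\xi):=\varphi_j(t\xi)\,\widehat{m_\alpha}(t\xi)
\quad\text{and}\quad
\partial_t m_j(t,\xi).
\]

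The symbol analysis rests on the classical Bessel asymptotic $\mathcal{J}_\nu(r)=\sqrt{2/(\pi r)}\cos(r-\nu\pi/2-\pi/4)+O(r^{-3/2})$, which, inserted into \eqref{1.2}, yields $|\widehat{m_\alpha}(\xi)|\lesssim |\xi|^{-\overline{w}-1/2}$ for $|\xi|\gtrsim 1$. Since $m_j$ is supported where $|t\xi|\sim 2^j$ and $t\in[1,2]$, this immediately gives $\|m_j(t,\cdot)\|_\infty\lesssim 2^{-(\overline{w}+1/2)j}$. For the $t$-derivative, the chain rule produces three contributions: one from $\partial_t\varphi_j(t\xi)$ of order $|\xi|\cdot 2^{-j}\sim 1$ multiplied by $|\widehat{m_\alpha}(t\xi)|$; one from differentiating the power factor $|t\xi|^{-\overline{w}-1/2}$; and one from differentiating the oscillatory factor $\cos(2\pi t|\xi|+\theta)$, which contributes a clean $|\xi|$-sized term and hence the dominant bound $|\xi|\cdot|t\xi|^{-\overline{w}-1/2}\sim 2^{(1/2-\overline{w})j}$. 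Altogether $\|\partial_t m_j(t,\cdot)\|_\infty\lesssim 2^{(1/2-\overline{w})j}$.

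Feeding these bounds into the reduction above and using $\|\cdot\|_{L_2(\mathcal{B})}^2=\int_1^2\|\cdot\|_{L_2(\mathcal{N})}^2\,dt$ gives
\[
\left\|\sup_{1\leq t\leq 2}{}^+\mathscr{M}_{j,t}^\alpha f\right\|_{L_2(\mathcal{N})}^2
\lesssim
\bigl(2^{-(2\overline{w}+1)j}+2^{-(\overline{w}+1/2)j}\cdot 2^{(1/2-\overline{w})j}\bigr)\|f\|_{L_2(\mathcal{N})}^2
\lesssim 2^{-2\overline{w} j}\|f\|_{L_2(\mathcal{N})}^2,
\]
and taking square roots delivers the proposition. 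The one subtle point is that differentiating the oscillatory part of the Bessel asymptotic costs a full factor $|\xi|\sim 2^j$ with no compensating decay; the Sobolev-type inequality \eqref{NL1} is precisely calibrated, via the geometric-mean structure of $\|\mathscr{M}_{j,t}^\alpha f\|_{L_2(\mathcal{B})}\,\|\partial_t\mathscr{M}_{j,t}^\alpha f\|_{L_2(\mathcal{B})}$, to absorb this loss and still produce the target rate $2^{-\overline{w} j}$.
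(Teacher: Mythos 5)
Your proposal follows the paper's own proof essentially verbatim: you apply the Sobolev-type inequality \eqref{NL1} with $m=1$, reduce each term to a symbol bound via the vector-valued Plancherel identity \eqref{plan}, use the Bessel asymptotics in \eqref{1.2} to get $\|m_j(t,\cdot)\|_\infty\lesssim 2^{-(\overline{w}+1/2)j}$ and $\|\partial_t m_j(t,\cdot)\|_\infty\lesssim 2^j\cdot 2^{-(\overline{w}+1/2)j}$, and combine exactly as the paper does to reach $2^{-\overline{w}j}$. The reasoning and the resulting bounds match the paper's argument step for step.
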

	\begin{proof}
		In view of (\ref{NL1}), we need to control
		%
		$$
		\left\|\mathscr{M}_{j, 1}^\alpha f\right\|_{L_2(\mathcal{N})}, \,\,\left\|\mathscr{M}_{j, t}^\alpha f\right\|_{L_2(\mathcal{B})} \,\,\text{  and }\,\,
		\left\|\frac{\partial}{\partial t} \mathscr{M}_{j, t}^\alpha f\right\|_{L_2(\mathcal{B})}
		.
		$$
		We first estimate the $L_2(\mathcal{B})$ norm of $\mathscr{M}_{j, t}^\alpha f$ for $1\leq t\leq 2$. It can be checked easily that
		\begin{equation}\label{L22}
			\begin{aligned}
				\left\|\mathscr{M}_{j, t}^\alpha f\right\|_{L_2(\mathcal{B})}
				=&
				2^{-(\overline{w}+\frac{1}{2}) j}\left\|\int_{\mathbb{R}^n} e^{2 \pi i \xi \cdot x} \varphi_j(t\xi) 2^{(\overline{w}+\frac{1}{2}) j} \widehat{m_\alpha}(t\xi) \widehat{f}(\xi) d \xi\right\|_{L_2(\mathcal{B})} \\
				\lesssim&
				2^{-(\overline{w}+\frac{1}{2}) j}\|f\|_{L_2(\mathcal{N})},
			\end{aligned}
		\end{equation}
		where the last inequality is a consequence of the fact that $2^{(\overline{w}+\frac{1}{2}) j} \widehat{m_\alpha}(t\xi)$ is a symbol of order zero 
		and vector-valued Plancherel's equation \eqref{plan}. Obviously, the above estimates yield
		\begin{equation}\label{L21}
			\left\|\mathscr{M}_{j, 1}^\alpha f\right\|_{L_2(\mathcal{N})}
			\lesssim
			2^{-(\overline{w}+\frac{1}{2}) j}\|f\|_{L_2(\mathcal{N})}.
		\end{equation}
		
		We now turn to estimate $\left\|\frac{\partial}{\partial t} \mathscr{M}_{j, t}^\alpha f\right\|_{L_2(\mathcal{B})}$. Note that
		$$
		\begin{aligned}
			\frac{\partial}{\partial t} \mathscr{M}_{j, t}^\alpha f(x)
			&=
			\int_{\mathbb{R}^n} e^{2 \pi i \xi \cdot x} h_j^{\alpha}(t, \xi) \widehat{f}(\xi),
		\end{aligned}
		$$
		where
		$$
		\begin{aligned}
			h_j^{\alpha}(t, \xi)
			& =\frac{\partial\widehat{m_\alpha}(t \xi)}{\partial t} \varphi_j(t \xi)+\frac{\partial\varphi_j(t \xi)}{\partial t}\widehat{m_\alpha}(t \xi).
		\end{aligned}
		$$
		From the asymptotic property of Bessel function (\ref{1.2}), we have
		$$
		\left|\frac{\partial\widehat{m_\alpha}(t \xi)}{\partial t}\right|
		\lesssim
		|t \xi|^{-(\overline{w}+\frac{1}{2})}|\xi|
		\approx
		2^j \cdot 2^{-(\overline{w}+\frac{1}{2})j}, 
		$$
		and $\left|\widehat{m_\alpha}(t \xi)\right|
		\lesssim
		|t \xi|^{-(\overline{w}+\frac{1}{2})}
		\approx
		2^{-(\overline{w}+\frac{1}{2})j}$.
		The above argument yields that
		$
		\frac{\partial}{\partial t} \mathscr{M}_{j, t}^\alpha f
		$
		behaves like
		$
		2^j \mathscr{M}_{j, t}^\alpha f.
		$
		Therefore, we have
		$$
		\left\|\frac{\partial}{\partial t} \mathscr{M}_{j, t}^\alpha f\right\|_{L_2(\mathcal{B})}
		\approx
		2^j\left\|\mathscr{M}_{j, t}^\alpha f\right\|_{L_2(\mathcal{B})}
		\lesssim
		2^j2^{-(\overline{w}+\frac{1}{2}) j}\|f\|_{L_2(\mathcal{N})} ,
		$$
		which, combined with (\ref{L22}) and (\ref{L21}), implies that
		$$
		\left\|\sup _{1\leq t\leq 2}{}^+\mathscr{M}_{j, t}^\alpha f\right\|_{L_2(\mathcal{N})}
		\lesssim
		2^{-\overline{w} j}\|f\|_{L_2(\mathcal{N})}.
		$$
		The proof is complete.
	\end{proof}

	\begin{proposition}\label{inter4}
		With all notation above, we get that for $j>0$,
		$$
		\left\|\sup _{1\leq t\leq 2}{ }^{+} \mathscr{M}_{j, t}^\alpha f\right\|_{L_4(\mathcal{N})}
		\lesssim
		2^{-\overline{w} j} 2^{(u-\frac{1}{4}) j} \|f\|_{L_4(\mathcal{N})},
		$$
		where $u$ is defined in Assumption \ref{assum}.
	\end{proposition}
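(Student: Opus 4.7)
The plan is to apply inequality (\ref{NL1}) with $m=2$, which reduces the proposition to three ingredients: (a) the space--time norm $\|\mathscr{M}_{j,t}^\alpha f\|_{L_4(\mathcal{B})}$, (b) its $t$-derivative $\|\partial_t \mathscr{M}_{j,t}^\alpha f\|_{L_4(\mathcal{B})}$, and (c) the fixed-time endpoint $\|\mathscr{M}_{j,1}^\alpha f\|_{L_4(\mathcal{N})}$. All three will be powered by Assumption \ref{assum} at $p=4$, where $u>(n-2)/4$.

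For (a), I would invoke the standard asymptotic expansion of the Bessel function $\mathcal{J}_{\overline{w}}(2\pi|\eta|)$ at infinity to decompose
$$
\widehat{m_\alpha}(t\xi)\,\varphi_j(t\xi)
=
2^{-(\overline{w}+\frac{1}{2})j}\sum_{\pm}e^{\pm 2\pi i t|\xi|}\,a_j^{\pm}(t,\xi)
+
R_j(t,\xi),
$$
where each $a_j^{\pm}(t,\cdot)$ is a zero-order symbol supported on $|\xi|\approx 2^j$ and $R_j$ is a rapidly decaying remainder. Thus $\mathscr{M}_{j,t}^\alpha f$ becomes a sum of two noncommutative Fourier integral operators of precisely the form in Assumption \ref{assum}, scaled by $2^{-(\overline{w}+1/2)j}$; the compactly-supported amplitude $\rho_1$ is absorbed via a standard partition of unity in $x$, the translation invariance of the FIO in $x$, and the almost-finite propagation of the wave kernel on $t\in[1,2]$. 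Applying Assumption \ref{assum} then yields
$$
\|\mathscr{M}_{j,t}^\alpha f\|_{L_4(\mathcal{B})}
\lesssim
2^{-(\overline{w}+\frac{1}{2})j}\cdot 2^{uj}\|f\|_{L_4(\mathcal{N})}
=
2^{-\overline{w} j}\,2^{(u-\frac{1}{2})j}\|f\|_{L_4(\mathcal{N})}.
$$
For (b), as in the proof of Proposition \ref{inter2}, each $\partial_t$ produces a factor of $|\xi|\approx 2^j$ (the contributions from derivatives falling on $\varphi_j(t\xi)$ or the amplitudes $a_j^{\pm}$ scale identically), so $\|\partial_t \mathscr{M}_{j,t}^\alpha f\|_{L_4(\mathcal{B})}\lesssim 2^{j}\cdot 2^{-\overline{w} j}\,2^{(u-\frac{1}{2})j}\|f\|_{L_4(\mathcal{N})}$.

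For (c), I would rerun the identity (\ref{NL}) on an enlarged window such as $[1/2,1]$: averaging $|\mathscr{M}_{j,1}^\alpha f|^{4}=|\mathscr{M}_{j,s_0}^\alpha f|^{4}+\int_{s_0}^{1}\partial_s|\mathscr{M}_{j,s}^\alpha f|^{4}\,ds$ over $s_0\in[1/2,1]$ and integrating in $x$, the same H\"older argument used to derive (\ref{NL1}) yields
$$
\|\mathscr{M}_{j,1}^\alpha f\|_{L_4(\mathcal{N})}^{4}
\lesssim
\|\mathscr{M}_{j,t}^\alpha f\|_{L_4(\mathcal{B}')}^{4}
+
\|\mathscr{M}_{j,t}^\alpha f\|_{L_4(\mathcal{B}')}^{3}\,\|\partial_t \mathscr{M}_{j,t}^\alpha f\|_{L_4(\mathcal{B}')},
$$
where $\mathcal{B}':=L_{\infty}(\mathbb{R}^n\times[1/2,1])\overline{\otimes}\mathcal{M}$. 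Plugging in the bounds from (a) and (b), both summands are $\lesssim 2^{-4\overline{w} j}\,2^{(4u-1)j}\|f\|_{L_4(\mathcal{N})}^{4}$, giving $\|\mathscr{M}_{j,1}^\alpha f\|_{L_4(\mathcal{N})}\lesssim 2^{-\overline{w} j}\,2^{(u-\frac{1}{4})j}\|f\|_{L_4(\mathcal{N})}$.

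Finally, substituting (a), (b), and (c) into (\ref{NL1}) with $m=2$, both terms on the right-hand side evaluate to $2^{-4\overline{w} j}\,2^{(4u-1)j}\|f\|_{L_4(\mathcal{N})}^{4}$, and a fourth root produces the claimed bound. The main obstacle is step (a): reducing $\mathscr{M}_{j,t}^\alpha$ to the FIO model of Assumption \ref{assum} through the Bessel asymptotic, tracking its remainder, and absorbing the compactly-supported cutoff $\rho_1$ by the patching argument are not fully routine in the noncommutative setting, since one must be careful with taking absolute values (cf.\ the proof of (\ref{NL})) and with translation/finite-propagation arguments for operator-valued kernels. Once this reduction is in place, the remaining manipulations parallel those of the $p=2$ case from Proposition \ref{inter2}.
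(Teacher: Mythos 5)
Your proposal is correct and follows essentially the same route as the paper: decompose $\mathscr{M}_{j,t}^\alpha$ through the Bessel asymptotic expansion into a sum of two Fourier integral operators with phases $x\cdot\xi\pm t|\xi|$ and zero-order amplitudes, apply Assumption \ref{assum} at $p=4$ to bound the space--time $L_4(\mathcal{B})$ norm by $2^{-(\overline{w}+\frac12)j}2^{uj}\|f\|_{L_4(\mathcal{N})}$, observe that $\partial_t$ costs an extra $2^j$, and feed these into the Sobolev-type inequality (\ref{NL1}) with $m=2$. The arithmetic in your final step is right: both summands come out to $2^{-4\overline{w}j}2^{(4u-1)j}\|f\|_{L_4(\mathcal{N})}^4$, giving the claimed bound upon taking fourth roots.

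The one genuine divergence is your treatment of the fixed-time term $\|\mathscr{M}_{j,1}^\alpha f\|_{L_4(\mathcal{N})}$. The paper asserts this bound (its (\ref{Mnorm11})) as a ``special case'' of the space--time estimate (\ref{Mnorm1}), but a bound on the $L_4$ norm over $\mathbb{R}^n\times\mathbb{R}$ does not by itself control a single time slice, so that step is not actually justified as written. Your device of rerunning the identity (\ref{NL}) on the shifted window $[1/2,1]$, averaging over the anchor $s_0$, and invoking the same H\"older argument fills this gap cleanly. You pay with the slightly weaker bound $2^{-\overline{w}j}2^{(u-\frac14)j}$ in place of the paper's asserted $2^{-\overline{w}j}2^{(u-\frac12)j}$, but since the product term in (\ref{NL1}) dominates either way, the final exponent is unchanged. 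You also explicitly flag the need to absorb the compactly supported cutoff $\rho_1(x,t)$ of Assumption \ref{assum} via a partition of unity in $x$ and finite-propagation considerations, a point the paper leaves implicit; that reduction is indeed the least routine part of the argument and worth spelling out. Overall your version is a bit more careful than the printed one on these two technical points.
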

	\begin{proof}
		Using the asymptotic expansion of Bessel function $\mathcal{J}_k(r)$ for $\operatorname{Re} k>-\frac{1}{2}$ (see \cite[p. 338]{S1993}), we have 
		\begin{equation}\label{Jkr}
			\mathcal{J}_k(r)
			\approx
			r^{-\frac{1}{2}}e^{i r} A_1(r)+r^{-\frac{1}{2}} e^{-i r} A_2(r),\quad r\geq 1,
		\end{equation}
		where
		$$
		A_1(r)=\sum_{\ell=0}^{\infty} c_1(\ell) r^{-\ell}, A_2(r)=\sum_{\ell=0}^{\infty} c_2(\ell) r^{-\ell}
		$$
		for some explicit coefficients $c_\sigma(\ell), \sigma=1,2$. 
		Then it follows from (\ref{1.2}) and (\ref{Jkr}) that
		$$
		\begin{aligned}
			\widehat{m_{\alpha}}(\xi)
			\approx&
			|\xi|^{-(\overline{w}+1/2)}\left( e^{2 \pi i|\xi| } A_1(2 \pi|\xi|)+ e^{-2 \pi i|\xi|} A_2(2 \pi|\xi|)
			\right) .
		\end{aligned}
		$$
		Now we can rewrite $\mathscr{M}_{j, t}^\alpha f$ as
		\begin{equation}\label{M}
			\mathscr{M}_{j, t}^\alpha f(x)
			\approx
			\mathscr{M}_{j, t}^{\alpha,1} f(x)+\mathscr{M}_{j, t}^{\alpha,2} f(x),
		\end{equation}
		where
		\begin{equation}\label{M1}
			\mathscr{M}_{j, t}^{\alpha,1} f(x)
			=
			\int_{\mathbb{R}^n} e^{2 \pi i(x \cdot \xi+t|\xi|)} \varphi_j(t \xi)|t \xi|^{-\left(\overline{w}+\frac{1}{2}\right)} A_1(2 \pi|t \xi|) \widehat{f}(\xi) d \xi
		\end{equation}
		and
		\begin{equation}\label{M2}
			\mathscr{M}_{j, t}^{\alpha,2} f(x)
			=
			\int_{\mathbb{R}^n} e^{2 \pi i(x \cdot \xi-t|\xi|)} \varphi_j(t \xi)|t \xi|^{-\left(\overline{w}+\frac{1}{2}\right)} A_2(2 \pi|t \xi|) \widehat{f}(\xi) d \xi.
		\end{equation}
		
		Choose a smooth positive function $\chi$ which is identical to one in a neighborhood of $[1,2]$ and vanishes outside the interval $(1/2,4)$. Let
		$
		\widetilde{\mathscr{M}_{j, t}^\alpha} f(x):=\chi(t) \mathscr{M}_{j, t}^\alpha f(x).
		$
		Then by 
		(\ref{NL1}), we have
		\begin{equation}\label{Mnorm}
			\begin{aligned}
				\left\|\sup _{1\leq t\leq 2}{}^+\mathscr{M}_{j, t}^\alpha f\right\|_{L_4(\mathcal{N})}^4
				\leq&
				\left\|\mathscr{M}_{j, 1}^\alpha f\right\|_{L_4(\mathcal{N})}^4
				+
				4\left\|\mathscr{M}_{j, t}^\alpha f\right\|_{L_4(\mathcal{B})}^3 \left\|\frac{\partial}{\partial t} \mathscr{M}_{j, t}^\alpha f\right\|_{L_4(\mathcal{B})}\\
				\leq&
				\left\|\mathscr{M}_{j, 1}^\alpha f\right\|_{L_4(\mathcal{N})}^4
				+4\left\|\widetilde{\mathscr{M}_{j, t}^\alpha} f\right\|_{L_4(\mathcal{A})}^3 \left\|\frac{\partial}{\partial t} \widetilde{\mathscr{M}_{j, t}^\alpha} f\right\|_{L_4(\mathcal{A})},
			\end{aligned}
		\end{equation}
		where $\mathcal{A}:=L_{\infty}\left(\mathbb{R}^n \times \mathbb{R}\right) \overline{\otimes} \mathcal{M}$. This enables us to reduce matters to the estimates of
		$\left\|\mathscr{M}_{j, 1}^\alpha f\right\|_{L_4(\mathcal{N})}$,
		$\left\|\widetilde{\mathscr{M}_{j, t}^\alpha} f\right\|_{L_4(\mathcal{A})}$ and $\left\|\frac{\partial}{\partial t} \widetilde{\mathscr{M}_{j, t}^\alpha} f\right\|_{L_4(\mathcal{A})}$.
		

		We first estimate the $L_4(\mathcal{A})$ norm of $\widetilde{\mathscr{M}_{j, t}^\alpha} f$.
		Applying Assumption \ref{assum} to $\chi(t)\mathscr{M}_{j, t}^{\alpha,1} f$, we have
		$$
		\begin{aligned}
			\left\|\chi(t)\mathscr{M}_{j, t}^{\alpha,1} f\right\|_{L_4(\mathcal{A})} 
			\lesssim
			2^{-\left(\overline{w}+\frac{1}{2}\right)j} 2^{u j}\|f\|_{L_4(\mathcal{N})},
		\end{aligned}
		$$
		since $2^{\left(\overline{w}+\frac{1}{2}\right) j}|t \xi|^{-\left(\overline{w}+\frac{1}{2}\right)} A_1(2 \pi|t \xi|)$ is a symbol of order zero.
		Similar inequality can also be obtained for $\chi(t)\mathscr{M}_{j, t}^{\alpha,2} f$, then clearly
		\begin{equation}\label{Mnorm1}
			\left\|\widetilde{\mathscr{M}_{j, t}^\alpha} f\right\|_{L_4(\mathcal{A})}
			\lesssim
			2^{-\left(\overline{w}+\frac{1}{2}\right) j} 2^{u j}\|f\|_{L_4(\mathcal{N})} .
		\end{equation}
		As a special case of the above estimate, we also have
		\begin{equation}\label{Mnorm11}
			\left\|\mathscr{M}_{j, 1}^\alpha f\right\|_{L_4(\mathcal{N})}
			\lesssim
			2^{-\left(\overline{w}+\frac{1}{2}\right) j} 2^{u j}\|f\|_{L_4(\mathcal{N})} .
		\end{equation}

		Next we consider the terms related to $\frac{\partial}{\partial t} \widetilde{\mathscr{M}_{j, t}^{\alpha}} f$. An easy argument shows that
		$$
		\begin{aligned}
			\frac{\partial}{\partial t} (\chi(t)\mathscr{M}_{j, t}^{\alpha,1} f(x))
			= &
			\int_{\mathbb{R}^n} e^{2 \pi i(x \cdot \xi+t|\xi|)} k_j(t, \xi) \widehat{f}(\xi) d \xi,
		\end{aligned}
		$$
		where
		$$
		\begin{aligned}
			k_j(t, \xi)
			=&
			\left(\frac{\partial\chi(t)}{\partial t} \varphi_j(t \xi)
			+
			\chi(t) \frac{\partial\varphi_j(t \xi)}{\partial t}\right)
			|t \xi|^{-\left(\overline{w}+\frac{1}{2}\right)} A_1(2 \pi|t \xi|) \\
			&+
			\left[2\pi i-
			\left(\overline{w}+\frac{1}{2}\right)|t\xi|^{-1} \right]|t \xi|^{-\left(\overline{w}+\frac{1}{2}\right)}
			\chi(t)|\xi| \varphi_j(t \xi)A_1(2 \pi|t \xi|) \\
			&+
			\chi(t) \varphi_j(t \xi)|t \xi|^{-\left(\overline{w}+\frac{1}{2}\right)} \frac{\partial A_1(2 \pi|t \xi|)}{\partial t}\\
			\lesssim&
			\frac{\partial\varphi_j(t \xi)}{\partial t}|t \xi|^{-\left(\overline{w}+\frac{1}{2}\right)} A_1(2 \pi|t \xi|)
			+
			\frac{\partial\chi(t)}{\partial t} \varphi_j(t \xi)|t \xi|^{-\left(\overline{w}+\frac{1}{2}\right)} A_1(2 \pi|t \xi|) \\
			& +\varphi_j(t \xi)|t \xi|^{-\left(\overline{w}-\frac{1}{2}\right)} B(2 \pi|t \xi|),
		\end{aligned}
		$$
		and $B(r)$ is a symbol of order zero.
		Similar analysis is valid for $\frac{\partial}{\partial t} (\chi(t)\mathscr{M}_{j, t}^{\alpha,2} f(x))$. Based on the above argument, we know that $\frac{\partial}{\partial t} \widetilde{\mathscr{M}_{j, t}^\alpha} f$ behaves like $2^j \widetilde{\mathscr{M}_{j, t}^\alpha} f$. So we have
		\begin{equation}\label{Mnorm2}
			\left\|\frac{\partial}{\partial t} \widetilde{\mathscr{M}_{j, t}^\alpha} f\right\|_{L_4(\mathcal{A})}
			\lesssim
			2^{-\left(\overline{w}+\frac{1}{2}\right) j} 2^j 2^{u j}\|f\|_{L_4(\mathcal{N})} .
		\end{equation}

		By (\ref{Mnorm}), 
		(\ref{Mnorm1}), (\ref{Mnorm11}) and (\ref{Mnorm2}), we conclude that
		$$
		\left\|\sup _{1\leq t\leq 2}{ }^{+} \mathscr{M}_{j, t}^\alpha f\right\|_{L_4(\mathcal{N})}
		\lesssim
		2^{-\overline{w} j} 2^{(u-\frac{1}{4}) j}\|f\|_{L_4(\mathcal{N})} .
		$$
		This completes the proof.
	\end{proof}

	\begin{proposition}\label{interinfty}
		With all notation above, we get that for $j>0$,
		$$
		\left\|\sup _{1\leq t\leq 2}{}^+ \mathscr{M}_{j,t}^\alpha f\right\|_{L_{\infty}(\mathcal{N})}
		\lesssim
		2^{-\overline{w}j}\|f\|_{L_{\infty}(\mathcal{N})}.
		$$
	\end{proposition}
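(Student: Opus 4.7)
My plan is to reduce the statement to a scalar $L_1$ estimate on the convolution kernel, then carry out that estimate by oscillatory-integral methods. First I observe that in the $L_\infty$ setting the noncommutative maximal norm collapses to a usual supremum: for any uniformly bounded family $(x_t)_{t>0}$ in $\mathcal{N}$ one checks directly from the definition of the $L_\infty(\mathcal{N};\ell_\infty)$ norm (taking $a=b=1$ in the factorization) that $\|\sup_{1\le t\le 2}{}^{+}x_t\|_{L_\infty(\mathcal{N})} = \sup_{1\le t\le 2}\|x_t\|_{L_\infty(\mathcal{N})}$. Hence it is enough to prove the uniform pointwise-in-$t$ estimate $\|\mathscr{M}_{j,t}^\alpha f\|_{L_\infty(\mathcal{N})} \lesssim 2^{-\overline{w} j}\|f\|_{L_\infty(\mathcal{N})}$ for $1\le t\le 2$.

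Next, I write $\mathscr{M}_{j,t}^\alpha f = \Phi_{j,t}\ast f$, with $\Phi_{j,t}$ the scalar inverse Fourier transform of $\varphi_j(t\xi)\widehat{m_\alpha}(t\xi)$. Young's inequality for the operator-valued convolution (valid because convolution with a scalar $L_1$ function preserves $L_\infty(\mathcal{N})$) gives $\|\mathscr{M}_{j,t}^\alpha f\|_{L_\infty(\mathcal{N})} \le \|\Phi_{j,t}\|_{L_1(\mathbb{R}^n)}\|f\|_{L_\infty(\mathcal{N})}$. The dilation $\Phi_{j,t}(x) = t^{-n}\Phi_{j,1}(x/t)$ makes $\|\Phi_{j,t}\|_{L_1}$ independent of $t\in[1,2]$, so the whole proposition reduces to the purely scalar bound $\|\Phi_{j,1}\|_{L_1(\mathbb{R}^n)} \lesssim 2^{-\overline{w} j}$.

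To estimate this $L_1$ norm I would invoke the Bessel function asymptotic (\ref{Jkr}) and split $\Phi_{j,1}\approx \Phi_{j,1}^{+}+\Phi_{j,1}^{-}$ exactly as in (\ref{M}), (\ref{M1}), (\ref{M2}) employed in Proposition \ref{inter4}, where
$$\Phi_{j,1}^{\pm}(x) = \int_{\mathbb{R}^n} e^{2\pi i(x\cdot\xi\pm|\xi|)} |\xi|^{-(\overline{w} + 1/2)} A_{\pm}(2\pi|\xi|)\varphi_j(\xi)\,d\xi.$$
Each piece is a weighted frequency-$2^j$ half-wave kernel at unit time, with amplitude of size $2^{-(\overline{w}+1/2)j}$ on its support $|\xi|\sim 2^j$.

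The hard part will be the stationary-phase analysis of the two oscillatory integrals $\Phi_{j,1}^{\pm}$. The phase $x\cdot\xi\pm|\xi|$ is stationary only when $x$ lies on the unit sphere, so one expects the kernel to be sharply concentrated in a $2^{-j}$-neighborhood of $\{|x|=1\}$ (of volume $\approx 2^{-j}$), with rapid decay elsewhere obtained by repeated integration by parts against the non-vanishing phase gradient. Carrying out this stationary-phase computation carefully in the radial and tangential variables, combined with the weight $|\xi|^{-(\overline{w}+1/2)}$, should produce the required bound $\|\Phi_{j,1}^{\pm}\|_{L_1(\mathbb{R}^n)}\lesssim 2^{-\overline{w} j}$, and summing the two pieces finishes the proof.
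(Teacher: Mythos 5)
Your reduction is correct and mirrors the paper's exactly: both collapse the noncommutative maximal norm via $L_\infty(\mathcal{M};\ell_\infty)=\ell_\infty(L_\infty(\mathcal{M}))$, invoke operator-valued Young's inequality, and reduce to the scalar kernel estimate $\|\Phi_{j,1}\|_{L_1(\mathbb{R}^n)}\lesssim 2^{-\overline{w}j}$ (equivalently, with the paper's normalization $G_{j,t}^\alpha=2^{\overline{w}j}\Phi_{j,t}$, to $\|G_{j,t}^\alpha\|_{L_1}\lesssim 1$). Your scaling remark $\Phi_{j,t}(x)=t^{-n}\Phi_{j,1}(x/t)$ is a clean way to dispose of the $t$-dependence, which the paper handles instead with a cutoff $\chi(t)$.

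The gap is in the final oscillatory-integral step, where your sketch does not in fact give the claimed $L_1$ bound for $n\geq 3$. On $\operatorname{supp}\varphi_j$ the amplitude $|\xi|^{-(\overline{w}+1/2)}A_\pm$ has size $\approx 2^{-(\overline{w}+1/2)j}$, and on the $2^{-j}$-shell $\{\,||x|-1|\lesssim 2^{-j}\,\}$ the angular stationary phase (gain $r^{-(n-1)/2}$, integrated over $r\sim 2^j$) gives the sharp pointwise bound $|\Phi_{j,1}^{\pm}(x)|\lesssim 2^{-(\overline{w}+1/2)j}\cdot 2^{j(n+1)/2}$, with rapid decay off the shell. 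Multiplying by the shell volume $\approx 2^{-j}$ yields
$$
\|\Phi_{j,1}^{\pm}\|_{L_1(\mathbb{R}^n)}\approx 2^{-\overline{w}j}\cdot 2^{j(n-2)/2},
$$
which matches the required $2^{-\overline{w}j}$ only when $n=2$. (The Seeger--Sogge angular decomposition into $\sim 2^{j(n-1)/2}$ caps, each contributing $\approx 2^{-(\overline{w}+1/2)j}$ to the $L_1$ norm, gives the same count.) So the phrase "should produce the required bound" is exactly where the argument does not close: a direct stationary-phase computation loses a factor $2^{j(n-2)/2}$ in dimensions $n\geq 3$. For context, the paper handles this step by citing \cite[p.\,25]{L15}, a computation done only for $n=2$, and asserting without detail that it extends to higher dimensions; your sketch is in the same spirit but cannot be taken as a proof for general $n$.
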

	\begin{proof}Define
		$$
		G_{j,t}^{\alpha}(x)
		=
		\int_{\mathbb{R}^n} e^{2 \pi i x \cdot \xi} 2^{\overline{w} j} \widehat{m_\alpha}(t \xi) \varphi_j(t \xi) d \xi,\quad
		x\in \mathbb{R}^n,\quad t>0.
		$$
		It can be deduced from the fact $L_{\infty}(\mathcal{M};\ell_{\infty})=\ell_{\infty}(L_{\infty}(\mathcal{M}))$ and vector-valued Young's inequality that 
		$$
		\begin{aligned}
			\left\|\sup_{1\leq t\leq 2}{}^+\mathscr{M}_{j, t}^\alpha f\right\|_{L_{\infty}(\mathcal{N})}
			=&
			2^{-\overline{w} j}\sup _{1\leq t\leq 2}\left\|G_{j,t}^{\alpha} * f\right\|_{L_{\infty}(\mathcal{N})} \\
			\lesssim&
			2^{-\overline{w}j} \sup _{1\leq t\leq 2}\left\|G_{j,t}^{\alpha}\right\|_{L_1\left(\mathbb{R}^n\right)}\|f\|_{L_{\infty}(\mathcal{N})}.
		\end{aligned}
		$$
		Hence, the required conclusion easily follows from
		\begin{equation}\label{Gt}
			\left\|\chi(t) G_{j,t}^{\alpha}\right\|_{L_1\left(\mathbb{R}^{n}\right)} \lesssim 1, \quad t>0,
		\end{equation}
		where $\chi(t)$ is as in Proposition \ref{inter4}.
		The proof of (\ref{Gt}) was given 
		in \cite[p. 25]{L15} for the special case  $n=2$, 
		which, in fact, applies 
		to higher dimensions as well.
	\end{proof}
	
	Now we are in the position to complete the proof of Proposition \ref{lemma12}.
	
	\begin{proof}[Proof of Proposition \ref{lemma12}]
		
		We employ the noncommutative interpolation theorem to prove this proposition.
		By Proposition \ref{inter2}, Proposition \ref{inter4} and Lemma \ref{interpolation}, we have for $2\leq p\leq 4$,
		$$
		\begin{aligned}
			\left\|\sup_{1\leq t\leq2}{}^+ \mathscr{M}_{j,t}^\alpha f\right\|_{L_{p}(\mathcal{N})}
			\lesssim&
			2^{-\overline{w} j(1-\theta_1)}\left[2^{-\overline{w} j} 2^{(u-\frac{1}{4}) j}\right]^{\theta_1}\left\| f\right\|_{L_p(\mathcal{N})}\\
			=&
			2^{-\overline{w} j}2^{(u-\frac{1}{4}) \theta_1 j}\left\| f\right\|_{L_p(\mathcal{N})},
		\end{aligned}
		$$
		where $\theta_1=2-4/p$. Let $\mu=-\overline{w}+(u-\frac{1}{4}) \theta_1$, by (\ref{alpha}), it is easy to see that $\mu<0$.

		Similarly, applying Proposition \ref{inter4}, Proposition \ref{interinfty} and Lemma \ref{interpolation}, we obtain for $4\leq p\leq \infty$,
		$$
		\begin{aligned}
			\left\|\sup_{1\leq t\leq2}{}^+ \mathscr{M}_{j,t}^\alpha f\right\|_{L_{p}(\mathcal{N})}
			\lesssim&
			\left[2^{-\overline{w} j} 2^{(u-\frac{1}{4}) j} \right]^{1-\theta_2}2^{-\overline{w}\theta_2}\left\| f\right\|_{L_p(\mathcal{N})}\\
			=&
			2^{-\overline{w} j}
			2^{\left(u-\frac{1}{4}\right)(1-\theta_2) j}\left\| f\right\|_{L_p(\mathcal{N})},
		\end{aligned}
		$$
		where $\theta_2=1-4/p .$ Let $\mu=-\overline{w}+\left(u-\frac{1}{4}\right)(1-\theta_2)$, by (\ref{alpha}), it is easy to see that $\mu<0$.
	\end{proof}
	
	\begin{remark}
		According to Assumption \ref{assum}, the sharp results can be obtained by the interpolation at $p=2$, $\bar{p}_n$ and $\infty$, 
		where $2< \bar{p}_n \leq 4$ for $n \geq 2$. Therefore our results are sharp for $n=2$ and $n=3$ since $\bar{p}_2=\bar{p}_3=4$.
		
		
	\end{remark}


\begin{thebibliography}{10}
		\bibitem{B1986}
		J. Bourgain, Averages in the plane over convex curves and maximal operators, J. Anal. Math. \textbf{47} (1986), 69-85.
		
		
		\bibitem{BD2015}
		J. Bourgain, C. Demeter, The proof of the $\ell^2$ decoupling conjecture, Ann. of Math. (2) \textbf{182} (2015), no. 1, 351-389.
		
		\bibitem{CXY2013}
		Z. Chen, Q. Xu, Z. Yin, Harmonic analysis on quantum tori, Comm. Math. Phys. \textbf{322} (2013), no. 3, 755-805.
		
		\bibitem{FK1986}T. Fack, H. Kosaki, Generalized $s$-numbers of $\tau$-measurable operators, Pacific J. Math. \textbf{123} (1986), no. 2, 269-300.
		
		\bibitem{GJP2017}
		A. Gonz\'{a}lez-P\'{e}  rez, M. Junge, J. Parcet, Smooth Fourier multipliers in group algebras via Sobolev dimension, Ann. Sci. \'{E}c. Norm. Sup\'{e}r. \textbf{50} (2017), no. 4, 879-925.
		
		\bibitem{GJP}
		A. Gonz\'{a}lez-P\'{e}rez, M. Junge, J. Parcet, Singular integrals in quantum euclidean spaces, Mem. Amer. Math. Soc. \textbf{272}(2021), no. 1334, xiii+90pp.
		
		\bibitem{GLMX2023}
		C. Gao, B. Liu, C. Miao, Y. Xi, Square funcion estimates and local smoothing for fourier integral operators. Proc. Lond. Math. Soc. \textbf{126} (2023), no. 6, 1923-1960.
		
		\bibitem{GS2010}
		G. Garrigs, A. Seeger. A mixed norm variant of Wolff's inequality for paraboloids, In Harmonic analysis
		and partial differential equations, volume 505 of Contemp. Math. pp. 179-197. American Mathematical
		Society, Providence (2010).
		
		\bibitem{GWZ2020}
		L. Guth, H. Wang, R. Zhang, A sharp square function estimate for the cone in $\mathbb{R}^3$,
		Ann. of Math (2) \textbf{192} (2020), no. 2, 551-581.
		
		\bibitem{HLX2022}
		G. Hong, X. Lai, B. Xu, Maximal singular integral operators acting on noncommutative $L_p$-spaces, Math. Ann. \textbf{386} (2023), no. 1-2, 375-414.
		
		\bibitem{HLW}
		G. Hong, X. Lai, L. Wang, Public talks.
		
		\bibitem{HWW}
		G. Hong, S. Wang, X. Wang, Pointwise convergence of noncommutative Fourier series, arXiv:1908.00240.
		
		\bibitem{J2002}
		M. Junge. Doob's inequality for noncommutative martingale, J. Reine Angew. Math. \textbf{549} (2002), 149-190.
		
		\bibitem{JLX2006}
		M. Junge, C. Le Merdy, Q. Xu, $H^{\infty}$-functional calculus and square functions on noncommutative
		$L_p$-spaces, Ast\'{e}risque. \textbf{305} (2006), vi+138pp.
		
		\bibitem{JM2010}
		M. Junge, T. Mei, Noncommutative Riesz transforms-a probabilistic approach, Amer. J. Math.
		\textbf{132} (2010), no. 3, 611-680.
		
		\bibitem{JM2012}
		M. Junge, T. Mei, BMO spaces associated with semigroups of operators, Math. Ann. \textbf{352} (2012), no. 3, 691-743.
		
		\bibitem{JMP2014}
		M. Junge, T. Mei, J. Parcet, Smooth Fourier multipliers on group von Neumann algebras, Geom.
		Funct. Anal. \textbf{24} (2014), no. 6, 1913-1980.
		
		\bibitem{JMP2018}
		M. Junge, T. Mei, J. Parcet, Noncommutative Riesz transforms-dimension free bounds and Fourier
		multipliers, J. Eur. Math. Soc. \textbf{20} (2018), no. 3, 529-595.
		
		\bibitem{JMPX2021}
		M. Junge, T. Mei, J. Parcet, R, Xia, Algebraic Calder\'{o}n-Zygmund theory, Adv. Math. \textbf{376} (2021), 72 pp.
		
		\bibitem{JX2007}
		M. Junge, Q. Xu, Noncommutative maximal ergodic theorems, J. Amer. Math. Soc. \textbf{20} (2007), no. 2, 385-439.
		
		\bibitem{L1976}
		E. C. Lance. Ergodic theorems for convex sets and operator algebras, Invent Math. \textbf{37} (1976), no.3, 201-214.
		
		\bibitem{L15}
		W. Li. Maximal functions associated with nonisotropic dilations of hypersurfaces in $\mathbb{R}^3$. Thesis. 2015.
		
		
		\bibitem{LW2002}
		I. Laba, T. Wolff. A local smoothing estimate in higher dimensions. J. Anal. Math. \textbf{88}, 2002, 149-171.
		
		\bibitem{LSZ2020}
		G. Levitina, F. Sukochev, D. Zanin, Cwikel estimates revisited, Proc. Lond. Math. Soc. \textbf{120} (2020), no. 2, 265-304.
		
		\bibitem{LSSY2023}
		N. Liu, M. Shen, L. Song, L. Yan, $L^p$ bounds for Stein's spherical maximal operators, arXiv:2303.08655.
		
		\bibitem{MM2013}
		A. Martini, D. M\"{u}ller, Spectral multiplier theorems of Euclidean type on further classes of two-step
		stratified groups, Proc. Lond. Math. Soc (3) \textbf{109} (2014), no. 5, 1229-1263.
		
		\bibitem{M2007}
		T. Mei, Operator valued Hardy spaces, Mem. Amer. Math. Soc. \textbf{188} (2007), no. 881, vi+64 pp.
		
		\bibitem{MP2009}
		T. Mei, J. Parcet, Pseudo-localization of singular integrals and noncommutative Littlewood-Paley
		inequalities, Int. Math. Res. Not. \textbf{8} (2009), 1433-1487.
		
		
		
		\bibitem{MSX2020}
		E. McDonald, F. Sukochev, X. Xiong, Quantum differentiability on noncommutative Euclidean
		spaces, Comm. Math. Phys. \textbf{379} (2020), no. 2, 491-542.
		
		\bibitem{MYZ2017}
		C. Miao, J. Yang, J. Zheng, On local smoothing problems and Stein's maximal spherical means,
		Proc. Amer. Math. Soc. \textbf{145} (2017), no. 10, 4269-4282.
		
		\bibitem{MSS1992}
		G. Mockenhaupt, A. Seeger, C.D. Sogge. Wave front sets, local smoothing and Bourgain's circular
		maximal theorem, Ann. of Math. (2) \textbf{136} (1992), no. 1, 207-218.
		
		
		\bibitem{PRS}
		J. Parcet, \'{E}. Ricard, M. de la salle. Fourier multipliers in $SL_n(R)$, Duke Math. J. \textbf{171} (2022), no. 6, 1235-1297.
		
		\bibitem{P1998}
		G. Pisier, Noncommutative vector-valued $L_p$-spaces and completely $p$-summing maps, Ast\'{e}risque. \textbf{247} (1998), vi+131 pp.
		
		\bibitem{PX1997}
		G. Pisier, Q. Xu, Non-commutative martingale inequalities, Comm. Math. Phys. \textbf{189} (1997), no.3, 667-698.
		
		\bibitem{PX2003}
		G. Pisier, Q. Xu, Noncommutative $L_p$ spaces. Handbook of geometry of Banach spaces, 2003, 1459-1517.
		
		\bibitem{S1991}
		C. Sogge, Propagation of singularities and maximal functions in the plane, Invent. Math. \textbf{104} (1991), no. 2, 349-376.
		
		\bibitem{S2017}
		C. Sogge, Fourier integrals in classical analysis. Second edition. Cambridge Tracts in Mathematics, 210.
		Cambridge University Press, Cambridge, 2017. 
		
		\bibitem{S1976}
		E.M. Stein. Maximal functions. I. Spherical means, Proc. Nat. Acad. Sci. U.S.A. \textbf{73} (1976), no. 7, 2174-2175.
		
		\bibitem{S1993}
		E.M. Stein. Harmonic Analysis: Real-variable Mehtods, Orthogonality and Oscillatory Integrals. Princeton Mathematical Series, vol. 43, Princeton University Press, Princeton, NJ, 1993.
		
		
		\bibitem{SW1971}
		E.M. Stein, G. Weiss. Introduction to Fourier analysis on Euclidean spaces,
		Princeton University Press, Princeton, NJ, 1971. Princeton Mathematical Series, No. 32.
		
		\bibitem{W1992}
		G.N. Watson, A treatise on the theory of Bessel functions, Cambridge University Press, Cambridge, 1995. 
		
		\bibitem{W2000}
		T. Wolff. Local smoothing type estimates on $L^p$ for large $p$, Geom. Funct. Anal. \textbf{10} (2000), no. 5, 1237-1288.
		
		\bibitem{XX2018}
		R. Xia, X. Xiong, Operator-valued Triebel-Lizorkin spaces, Integral Equ. Oper. Theory, \textbf{90} (2018), no. 6, 65pp.
		
		\bibitem{XXX2016}
		R. Xia, X. Xiong, Q. Xu, Characterizations of operator-valued Hardy spaces and applications to
		harmonic analysis on quantum tori, Adv. Math. \textbf{291} (2016), 183-227.
		
		\bibitem{XXY2018}
		X. Xiong, Q. Xu, Z. Yin, Sobolev, Besov and Triebel-Lizorkin spaces on quantum tori, Mem.
		Amer. Math. Soc. \textbf{252} (2018), vi+118 pp.
		
		\bibitem{Y1977}
		F.J. Yeadon, Ergodic theorems for semifinite von Neumann algebras. I, J. London Math. Soc. (2) \textbf{16}
		(1977), no. 2, 326-332.
	\end{thebibliography}
\end{document}